\pdfoutput=1
\documentclass{amsart}
\usepackage[longtable]{multirow}
\usepackage{graphicx}
\usepackage{longtable}
\usepackage{amsmath}
\usepackage{amsthm}
\usepackage{amsfonts}
\usepackage{mathtools}
\usepackage{fullpage}
\usepackage{mathtools}
\usepackage{tikz}
\usepackage{amssymb}
\newtheorem{theorem}{Theorem}
\newtheorem*{theorem*}{Theorem}

\newtheorem{lemma}{Lemma}
\newtheorem{corollary}{Corollary}

\theoremstyle{definition}
\newtheorem*{definition}{Definition}

\usepackage{rotating}
\usepackage{accents}
\usepackage[margin=1in]{geometry}

\title{Shifting local exponents of Picard-Fuchs operators}
\author[]{Tymoteusz Chmiel}

\begin{document}

\maketitle

\begin{abstract}
We investigate the operation of shifting local exponents and study its effects on the monodromy representation of a one-parameter family of Calabi-Yau threefolds. The main result is a characterization of shifts of geometric operators which are also geometric. We use this description to construct some Picard-Fuchs operators with interesting properties.
\end{abstract}

\section*{Introduction}

Picard-Fuchs operators of one-parameter families of Calabi-Yau threefolds have long been the subject of intensive research from many different perspectives. A fact crucial in all applications is that Picard-Fuchs operators are Fuchsian, i.e. they have only regular singular points.

A simple way of constructing new Fuchsian operators from a given one is the \mbox{\emph{shift of local exponents}}. Assume that $y(t)$ satisfies a Fuchsian equation $\mathcal{P}y=0$ and $f(t)$ is a function on the parameter space. We say that an operator $\mathcal{Q}$ is \emph{\mbox{a shift of $\mathcal{P}$,}} if the product $z(t):=f(t)y(t)$ satisfies the equation $\mathcal{Q}z=0$, 

Not every shift of a Picard-Fuchs operator is itself of geometric origin. For an operator coming from a one-parameter family of Calabi-Yau threefolds, the cup product induces a non-degenerate alternating form on the space of solutions and the monodromy group is conjugate to a subgroup of the \mbox{symplectic group $\mathrm{Sp}(4,\mathbb{Z})$.} This is no longer true for arbitrary shifts.

The goal of this paper is to study the effects of shifting local exponents on the symplectic structure of a Fuchsian operator. We give a complete characterization of shifts for which both $\mathcal{P}$ and $\mathcal{Q}$ admit a monodromy-invariant symplectic form and conclude that only very special shifts of Picard-Fuchs operators can themselves be Picard-Fuchs operators. We give two applications of this general result.

Firstly, we construct a Fuchsian operator with local characteristics of a Picard-Fuchs operator: its local monodromy operators are quasi-unipotent and symplectic. However, this operator does not admit a global monodromy-invariant symplectic form. Thus it cannot be a Picard-Fuchs operator of any family of Calabi-Yau threefolds. The operator is a shift of the Picard-Fuchs operator of certain family of double octics.

The second application focuses on the index of the monodromy group in $\mathrm{Sp}(4,\mathbb{Z})$. Whether this index is finite is known only in a handful of cases. Recently, a condition sufficient for the infiniteness of the index was given in \cite{simion}. All operators known to have monodromy group of infinite index satisfy this condition and it is natural to ask if under some assumptions this condition is also necessary. Using shifts of local exponents, we show that any operator satisfying the condition can be transformed into an operator for which the condition fails, yet whose monodromy group is still of infinite index. For Picard-Fuchs operators of double octics this shift can be performed on the level of the family. This gives many examples of Picard-Fuchs operators with monodromy group of infinite index which do not satisfy the sufficient condition of \cite{simion}.

The structure of the paper is as follows. Section \ref{sec:Picard-Fuchs} covers basic theory of Picard-Fuchs operators and introduces shifts of local exponents. In Section \ref{sec:sp-shifts} we prove our main results on shifts of operators admitting a monodromy-invariant symplectic form. Section \ref{sec:app} contains applications of these results to Picard-Fuchs operators of double octics.

\section{Preliminaries: Picard-Fuchs operators}\label{sec:Picard-Fuchs}

\subsection{One-parameter families of Calabi-Yau threefolds}\label{ssec:PFfamilies}

A \emph{Calabi-Yau threefold} is a smooth complex projective variety $X$ of dimension $3$ satisfying
$\Omega^{3}_X\simeq\mathcal{O}_X$ and $H^1(X,\mathcal{O}_X)=0$. By the Bogomolov-Tian-Todorov theorem there exists a universal family $\mathcal{X}\rightarrow S$, $b\in S$, where $S$ is a complex manifold with $\dim S=h^{2,1}(X)$ and the fiber $X_{b}$ is isomorphic to $X$. For every $t\in S$ the fiber $X_t$ is a Calabi-Yau threefold with $h^{2,1}(X_t)=h^{2,1}(X)$. When $h^{2,1}(X_t)=1$, we say that $\mathcal{X}\rightarrow S$ is a \emph{one-parameter family}.

Fix a holomorphic family $\omega_t\in H^3(X_t,\mathbb{C})$ and a smooth family $\gamma_t\in H_3(X_t,\mathbb{C})$, defined in some neighbourhood $U$ of the \mbox{base point $b$}. A \emph{period function} associated with these choices is $y(t):=\int_{\gamma_t}\omega_t$. It satisfies a linear differential equation $\mathcal{P}y=0$ for some differential operator
\begin{equation}\label{eq:d-form}
\mathcal{P}=q_n(t)\frac{d^n}{dt^n}+q_{n-1}(t)\frac{d^{n-1}}{dt^{n-1}}+\cdots+q_2(t)\frac{d^2}{dt^2}+q_1(t)\frac{d}{dt}+q_0(t)\quad\quad q_i(t)\in\mathcal{O}(U)\textnormal{ for } i=0,\cdots,n	
\end{equation}
\noindent The operator $\mathcal{P}$ is called a \emph{Picard-Fuchs operator} of the family $\mathcal{X}$. The space of solutions of $\mathcal{P}=0$ near $t\in U$ is isomorphic to $H^3(X_t,\mathbb{C})$. This identification defines a variation of Hodge structures on the local system of solutions.

In this paper we study one-parameter families over (Zariski-open subsets of) the projective line.
\begin{definition}\label{def}
We call a differential operator \emph{geometric} if it is a Picard-Fuchs operator of a family of Calabi-Yau threefolds $\mathcal{X}\rightarrow\mathbb{P}^1\setminus\Sigma$, where $\Sigma$ is a finite set of singular points and $h^{2,1}(X_t)=1$ for $t\not\in\Sigma$.
\end{definition}
\noindent If $\mathcal{P}$ is a geometric operator, we may assume that the coefficients $q_i(t)$ in (\ref{eq:d-form}) are rational functions (see \cite{Hofmann}). Furthermore, we have $n=b_3(X_t)=4$.

Put $S:=\mathbb{P}^1\setminus\Sigma$. Let $V$ be the space of solutions of $\mathcal{P}=0$ near $b\in S$. Any solution $y\in V$ can be continued analytically along any loop $\gamma:[0,1]\rightarrow S$, $\gamma(0)=\gamma(1)=b$, and the result $M_{\gamma}(y)$ is still a solution of $\mathcal{P}=0$. This defines a representation $M:\pi_1(S,b)\ni[\gamma]\mapsto M_\gamma\in GL(V)$, called \textit{the monodromy representation}. The image of the monodromy representation is \emph{the monodromy group} \mbox{$Mon(\mathcal{P})\subset\mathrm{GL}(V)\simeq\mathrm{GL}(4,\mathbb{C})$.} If $\gamma$ encircles one singular point $s\in\Sigma$ once counterclockwise, we call $M_s:=M_\gamma\in\mathrm{GL}(V)$ \mbox{a \emph{local monodromy operator at $s$}.} The monodromy group is generated by $M_s$ with $s\in\Sigma$.

\subsection{Local exponents of geometric operators}\label{ssec:PFexponents}

An important property of Picard-Fuchs operators is that they are Fuchsian, i.e. they have only regular singular points. For a geometric operator $\mathcal{P}$ given as in (\ref{eq:d-form}), \mbox{$0$ is a regular} singular point if and only if for every $i=0,\cdots,n$ the rational function $\tfrac{q_i(t)}{q_n(t)}$ has at $t=0$ pole of order at most $n-i$.

Let $\Theta:=t\cdot\frac{d}{dt}$ denote the logarithmic derivative and consider a differential operator $\mathcal{P}$ in its $\Theta$-form:
\begin{equation}\label{eq:theta-form}
\mathcal{P}=\mathcal{P}(\Theta,t)=\Theta^n+p_{n-1}(t)\Theta^{n-1}+\cdots+ p_{1}(t)\Theta+p_0(t)
\end{equation}
Assume that $0$ is a regular singular point of $\mathcal{P}=0$. Then the coefficients $p_i(t)$, $i=0,\cdots,n-1$, are holomorphic in its neighbourhood. In particular, $p_i(0)$ is well-defined.

The \emph{local exponents} of $\mathcal{P}$ at $0$ are the complex roots of the indicial equation
$$
\mathcal{P}(X,0)=X^n+p_{n-1}(0)X^{n-1}+\cdots+ p_{1}(0)X+p_0(0)=0
$$
Local exponents at points $t_0\neq 0$ are computed using a change of variables. It is easy to check that a \mbox{point $t_0$} is non-singular if and only if the local exponents at $t_0$ are $0,1,\cdots,n-2,n-1$.

Assume that $\mathcal{P}$ is a geometric operator. The Frobenius method of solving Fuchsian equations gives a clear connection between local exponents at the origin and the space of solutions in its neighbourhood. \mbox{Let $\alpha_1,\cdots,\alpha_k$,} $1\leq k\leq 4$, be the local exponents at $0$ written without repetitions and \mbox{let $m_i$,} \mbox{$i=1,\cdots,k$,} be their respective multiplicities. Write $t^\alpha:=e^{\alpha\log t}$.

Fix $i\in\{1,\cdots,k\}$. Let $\alpha:=\alpha_{i}$ and $m:=m_{i}$. Then there exist holomorphic functions $g_1,\cdots,g_{m}$, non-vanishing at $0$, such that the (possibly multi-valued) functions
$$\begin{array}{l}
y_1(t):=t^{\alpha}\cdot g_1(t)\\
y_2(t):=t^{\alpha}\cdot\big(g_2(t)+\log(t)g_1(t)\big)\\
\quad\cdots\quad \\
y_{m}(t):=t^{\alpha}\cdot\left(\displaystyle\sum_{l=0}^{m-1}\binom{m-1}{l}\log^l(t)g_{m-l}(t)\right)
\end{array}$$
form a linearly independent set of solutions of $\mathcal{P}=0$ in a neighbourhood of the origin.

Union of these sets forms a basis of solutions, called \mbox{the \emph{Frobenius basis}.} In particular, the number of Jordan blocks of the local monodromy operator $M_0$ is equal to the number of distinct local exponents and the eigenvalues of $M_0$ are $e^{2\pi i\alpha_1},\cdots,e^{2\pi i\alpha_k}$. By the Local Monodromy Theorem, $M_0$ is quasi-unipotent: there exists $r\in\mathbb{N}_{>0}$ such that \mbox{$\left(M_0^r-\operatorname{Id}\right)^4=0$}. It follows that all local exponents are rational.

\subsection{Symplectic structure of geometric operators}\label{ssec:PFsymplectic}

Another important property of geometric operators is their symplectic structure. Let $V$ be the space of solutions of a geometric equation $\mathcal{P}=0$ near a non-singular point $b$. Recall that $V$ is isomorphic to $H^3(X_{b},\mathbb{C})$. In particular, it contains a monodromy-invariant lattice $H^3(X_{b},\mathbb{Z})$. The cup product defines a non-degenerate alternating form on $V$ which is also monodromy-invariant. Thus there exists a \mbox{\textit{symplectic basis of solutions},} giving an identification $V\simeq\mathbb{C}^4$ such that $Mon(\mathcal{P})\subset \mathrm{Sp}(4,\mathbb{Z})$.

Given a subgroup $G\subset\mathrm{Sp}(4,\mathbb{Z})$ a natural question is whether the index $[ \mathrm{Sp}(4,\mathbb{Z}):G]$ is finite. If that is the case, $G$ is called \emph{arithmetic}. Clearly arithmetic subgroups are Zariski-dense. If $G$ is Zariski-dense but of infinite index, it is called \emph{thin}.

The monodromy group $Mon(\mathcal{P})\subset\mathrm{Sp}(4,\mathbb{Z})$ can be arithmetic or thin, and at present there is no general procedure for determining whether its index is finite. Even in the simplest case of hypergeometric operators it took considerable effort to establish which monodromy groups are arithmetic (see \cite{Brav-Thomas,Singh,Singh-Venkataramana}). For other operators one has only sporadic results \mbox{(see \cite{Hofmann-van Straten,orphan2}).}

Recently, two sufficient conditions for the \emph{infiniteness} of the index were given in \cite{simion}. The conditions, called \emph{Assumptions A and B}, are defined in terms of local Kodaira-Spencer maps (\cite[Definitions 2.1.9 and 5.4.1]{simion}). If they are satisfied at every point, the monodromy representation is log-Anosov \mbox{(\cite[Definition 4.3.2]{simion})}. This implies that the monodromy group is of infinite index. The following lemma shows how one can verify Assumptions A and B using local exponents.

\begin{lemma}{\cite[Proposition 3.1.8]{simion}}\label{lem:AB}
	Let $\alpha_1\leq\alpha_2\leq\alpha_3\leq\alpha_4$ be local exponents of a geometric operator $\mathcal{P}$ at a point $s\in\mathbb{P}^1$. Put $k:=\#\{\alpha_1,\cdots,\alpha_4\}$. If $k=4$, let $N\in\mathbb{N}_{>0}$ be the order of local monodromy at $s$.
	\begin{itemize}
		\item If $k=1$, the point $s$ satisfies Assumption A and Assumption B.
		\item If $k=2$, the point $s$ satisfies Assumption B but not Assumption A.
		\item If $k=3$, the point $s$ satisfies Assumption A but not Assumption B.
		\item If $k=4$, the point $s$ satisfies Assumption B if and only if $N(\alpha_2-\alpha_1)=1$.
		\item If $k=4$, the point $s$ satisfies Assumption A if and only if $N(\alpha_3-\alpha_2)=1$.
	\end{itemize}
	
	If every point satisfies Assumption A, then $Mon(\mathcal{P})\subset\mathrm{Sp}(4,\mathbb{Z})$ is of infinite index.
	
	If every point satisfies Assumption B, then $Mon(\mathcal{P})\subset\mathrm{Sp}(4,\mathbb{Z})$ is of infinite index.
\end{lemma}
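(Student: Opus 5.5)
This lemma is quoted from \cite[Proposition 3.1.8]{simion}, so the final two assertions (infinite index under Assumption A or B everywhere) will be taken from \cite{simion}. There Assumptions A/B at every point give a log-Anosov representation, and log-Anosov groups are thin. The plan is therefore to reduce the case list to a computation relating the local exponents to the local Kodaira--Spencer maps of the limiting mixed Hodge structure at $s$.

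First, pass to a finite base change $t\mapsto t^N$ with $N$ the order of the semisimple part of $M_s$. This makes the local monodromy unipotent. It also multiplies all local exponents by $N$; after translating so that $\alpha_1=0$, they become integers $N\alpha_i-N\alpha_1$. The Frobenius basis from Section \ref{ssec:PFexponents} then describes the period $y$ of the holomorphic form near $s$. The successive derivatives $\Theta^j y$, $j=0,\dots,3$, span the Hodge filtration $F^{3-j}$ of the variation. Writing them in the Frobenius basis, the Kodaira--Spencer map $F^{p}/F^{p+1}\to F^{p-1}/F^{p}$ extended to the limit at $s$ is computed from the logarithmic connection on the Deligne extension. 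It is an isomorphism (the relevant condition in \cite[Definitions 2.1.9 and 5.4.1]{simion}) exactly when the corresponding gap in the filtration has minimal length.

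Concretely, the steps are as follows. (i) Repeated exponents produce $\log$ terms, so the residue of the connection is nilpotent of rank $k'$ determined by the multiplicities. The case $k=1$ gives a maximally unipotent point, where $N$ acts as a principal nilpotent and all graded Kodaira--Spencer maps are nonzero; hence both A and B hold. (ii) For $k=2$ and $k=3$ the Jordan structure ($2+2$ or $1+2+1$ type blocks, as forced by symplecticity) shows that exactly one of the middle and outer graded pieces degenerates. This gives the asymmetry between A (middle map $F^2\to F^1$) and B (outer map $F^3\to F^2$). (iii) For $k=4$ the monodromy after base change is trivial and the extension is given by $t^{N(\alpha_i-\alpha_1)}$ twists. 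The map $F^3\to F^2$ is then multiplication by $t^{N(\alpha_2-\alpha_1)-1}$, nonvanishing at $s$ iff $N(\alpha_2-\alpha_1)=1$. Similarly $F^2\to F^1$ corresponds to $N(\alpha_3-\alpha_2)=1$.

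The main obstacle is step (ii)/(iii): identifying the ordering of exponents with the Hodge filtration at $s$. This requires that the indicial ordering $\alpha_1\le\dots\le\alpha_4$ matches the order of vanishing of $y,\Theta y,\ldots$, and that symplectic duality $\alpha_i+\alpha_{5-i}$ constant holds, so that only the two listed gaps matter. I would verify this using Griffiths transversality together with the fact that the cup product pairs $F^3$ with $F^0/F^1$ nondegenerately in the limit.
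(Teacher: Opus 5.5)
The paper gives no argument here. The lemma, including both infinite-index statements, is quoted from \cite[Proposition 3.1.8]{simion}, so on the last two assertions you and the paper coincide.

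Your plan to derive the case list (base change to unipotent monodromy, then read the Kodaira--Spencer maps off the Deligne extension) is the right kind of argument. Steps (i) and (iii) are essentially sound. For $k=4$ the periods in the chart $t=t'^N$ are $t'^{e_j}g_j$ with distinct $e_j=N(\alpha_j-\alpha_1)$. The wedge $\omega\wedge\partial_{t'}\omega\wedge\dots\wedge\partial_{t'}^{r-1}\omega$ vanishes to order $\sum_{j\le r}e_j-\binom r2$, with leading term along the first $r$ coordinates. So the limiting flag is the coordinate flag, and the ordinary Kodaira--Spencer maps vanish to orders $e_{j+1}-e_j-1$.

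The real gap is step (ii), which carries the whole content of the cases $k=2,3$. You do not say which map degenerates. The Jordan type decides this only through the limiting mixed Hodge structure together with $h^{p,3-p}=1$. Write $\mathcal N$ for the logarithm of the unipotent monodromy.
\begin{itemize}
\item If $\mathcal N$ has rank one, $\mathrm{Gr}^W_4$ is one-dimensional and real, hence of type $(2,2)$. So $F^3_\infty=I^{3,0}\subset W_3=\ker\mathcal N$: the outer logarithmic map vanishes (B fails), while $\mathcal N\colon I^{2,2}\to I^{1,1}$ gives A.
\item If $\mathcal N$ has two blocks of size two, $h^{3,0}=1$ forces $\mathrm{Gr}^W_4=I^{3,1}\oplus I^{1,3}$ and $\mathrm{Gr}^W_2=I^{2,0}\oplus I^{0,2}$, and the roles are reversed.
\end{itemize}
Without this input the bullets for $k=2$ and $k=3$ are not proved.

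Second, you never fix what Assumptions A and B mean, and your overview conflates two regimes. After base change the residue at a $k=4$ point is $0$, so every logarithmic Kodaira--Spencer map vanishes there. Step (iii) silently switches to the ordinary derivative in an orbifold chart. At cusps, by contrast, the right criterion is gap $0$ in the logarithmic sense: the conifold $0,1,1,2$ has outer gap $1$ and fails B. So your slogan ``isomorphism iff the gap is minimal'' needs these two meanings made explicit.

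These conventions, and the choice of chart, must be imported from \cite[Definitions 2.1.9 and 5.4.1]{simion}. They are not cosmetic. At $0$ for $\mathcal{P}(\tfrac k2)$ in Section~\ref{ssec:A/B}, the monodromy is $-\operatorname{Id}$, and the period map is unchanged by the twist and immersive at $0$. Only the convention using the order of $M_s$ in $\mathrm{Sp}(4)$, not in $\mathrm{PSp}(4)$, reproduces the paper's $N=2$ and its conclusion there.

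Finally, the ``main obstacle'' you postpone can be settled uniformly:
\begin{itemize}
\item Take a frame $\sigma_1,\dots,\sigma_4$ of the unipotent Deligne extension adapted to $\overline F^{\bullet}$, with $\sigma_i(0)$ in the Deligne splitting.
\item Let $m_i$ be the vanishing orders of the logarithmic Kodaira--Spencer maps.
\item The lattice spanned by $\omega,\nabla_\Theta\omega,\nabla_\Theta^2\omega,\nabla_\Theta^3\omega$ then has basis $t^{a+m_1+\dots+m_{i-1}}\sigma_i$.
\item The residue of $\nabla_\Theta$ on this lattice is lower bidiagonal with these diagonal entries, and its characteristic polynomial is the indicial polynomial.
\end{itemize}
Hence $N(\alpha_{i+1}-\alpha_i)=m_i$, with $N$ the degree of the base change, and polarization gives $m_1=m_3$. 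The four degeneration types then correspond exactly to $k=1,2,3,4$, and every bullet of the lemma follows.
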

In the Introduction of \cite{simion}, the author notes that it would be interesting to understand when the disjunction of Assumptions A and B is also necessary for the infiniteness of the index. For example, a hypergeometric operator $\mathcal{P}$ satisfies Assumption A if and only if the monodromy group $Mon(\mathcal{P})$ is thin. We come back to this question in Section \ref{ssec:A/B}.

\subsection{Shift of local exponents}\label{ssec:shifts}

	In the next two sections we study an operation on the set of Fuchsian operators, which we call the \emph{shift of local exponents}. Let us start with the definition and then briefly discuss the motivation.
	
	\begin{definition}
		Let $\mathcal{P}=\mathcal{P}(\Theta,t)$ be a differential operator and let $\alpha\in\mathbb{C}$. The operator $\mathcal{Q}$ is obtained from $\mathcal{P}$ via the \emph{shift of local exponents at $0$ by $\alpha$} when $\mathcal{Q}=\mathcal{Q}(\Theta,t)=\mathcal{P}(\Theta-\alpha,t)=:\mathcal{P}(\alpha)$.
	\end{definition}
	
	Let $y(t)$ be a local solution of the Picard-Fuchs equation $\mathcal{P}=0$, associated with a family of $3$-forms \mbox{$\omega_t\in H^3(X_t,\mathbb{C})$}. For any holomorphic function $f(t)$, $f(t)\omega_t$ also defines a holomorphic family of $3$-forms for the \mbox{family $X_t$.} The period function associated with this family is \mbox{$z(t):=f(t)y(t)$} and it satisfies $\mathcal{Q}z=0$ for some Fuchsian operator $\mathcal{Q}$. We say that $\mathcal{Q}$ is the \mbox{\emph{shift of $\mathcal{P}$ by f(t)}.}
	
	Assume $\mathcal{P}$ is geometric and $f(t)$ is	an algebraic function. The function $f(t)$ defines a meromorphic function on an appropriate Riemann surface and $z(t)$ is a period function of the pull-back of the family $X_t$ by the covering map. Thus usually the shifted operator $\mathcal{Q}$ is defined over a curve of genus $g>0$.
	
	For this reason we focus on the case when some power of $f(t)$ is meromorphic. Then the \mbox{shift of $\mathcal{P}$ by $f(t)$} can be obtained by repeated applications of M\"obius transformations and shifts by $t^\alpha$ for some $\alpha\in\mathbb{Q}$. The following lemma shows that the shift of $\mathcal{P}$ by $t^\alpha$ is precisely $\mathcal{P}(\alpha)$.
	
	\begin{lemma}\label{lem:shift}
		Let $\mathcal{P}$ be a Fuchsian operator and let $\alpha_1,\cdots,\alpha_n$ be the local exponents of $\mathcal{P}$ at $0$. Fix $\alpha\in\mathbb{C}$.
		
		The shifted operator $\mathcal{P}(\alpha)$ is Fuchsian and the local exponents of $\mathcal{P}(\alpha)$ at $0$ are $\alpha_1+\alpha,\cdots,\alpha_n+\alpha$.
		
		Furthermore, \mbox{if $y(t)$ is} a solution of $\mathcal{P}=0$, then $t^{\alpha}y(t)$ is a solution of $\mathcal{P}(\alpha)=0$.
	\end{lemma}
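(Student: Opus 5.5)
The whole argument rests on one operator identity, the commutation rule
$$\Theta\circ t^{\alpha}=t^{\alpha}\circ(\Theta+\alpha),$$
understood as operators acting on (possibly multivalued) functions near $0$ on a slit neighbourhood. I would verify it directly: $\Theta(t^\alpha g)=t\cdot(\alpha t^{\alpha-1}g+t^\alpha g')=t^\alpha(\alpha g+\Theta g)$. Conjugating, this reads $t^{-\alpha}\circ\Theta\circ t^{\alpha}=\Theta+\alpha$. Equivalently, $t^{\alpha}\circ(\Theta-\alpha)\circ t^{-\alpha}=\Theta$, which gives $\Theta-\alpha=t^{-\alpha}\circ\Theta\circ t^{\alpha}$. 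Iterating yields
$$(\Theta-\alpha)^j=t^{-\alpha}\circ\Theta^j\circ t^{\alpha}.$$

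Next I would write $\mathcal{P}$ in the $\Theta$-form (\ref{eq:theta-form}), with coefficients $p_i(t)$ on the left. Multiplication by a function $p_i(t)$ commutes with multiplication by $t^{\pm\alpha}$, so
$$\mathcal{P}(\alpha)=\sum_i p_i(t)(\Theta-\alpha)^i=t^{-\alpha}\circ\mathcal{P}(\Theta,t)\circ t^{\alpha}.$$
Strictly, $\mathcal{P}(\alpha)$ should be defined by substituting $\Theta-\alpha$ into the left-coefficient $\Theta$-form; I take that as the intended reading. Hmm — but the last claim of the lemma says $t^\alpha y$ solves $\mathcal{P}(\alpha)$. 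With the formula above we get $\mathcal{P}(\alpha)(t^{\alpha}y)=t^{-\alpha}\mathcal{P}(t^{2\alpha}y)$, which is wrong. So I must redo the sign carefully. From $\Theta(t^\alpha g)=t^\alpha(\Theta+\alpha)g$ we get $(\Theta-\alpha)(t^\alpha g)=t^\alpha\Theta g$. Hence
$$(\Theta-\alpha)\circ t^{\alpha}=t^{\alpha}\circ\Theta,\qquad\text{so}\qquad \mathcal{P}(\alpha)\circ t^\alpha=t^\alpha\circ\mathcal{P},$$
i.e. $\mathcal{P}(\alpha)=t^{\alpha}\circ\mathcal{P}\circ t^{-\alpha}$. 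The solution statement is then immediate: $\mathcal{P}(\alpha)(t^\alpha y)=t^\alpha\mathcal{P}y=0$.

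For the Fuchsian property, at $0$ I would expand $(\Theta-\alpha)^i$ by the binomial theorem, since $\Theta$ and $\alpha$ commute. The new $\Theta$-form coefficients are then $\mathbb{C}$-linear combinations of the $p_j(t)$ plus constants, with leading coefficient still $1$. These coefficients are holomorphic at $0$, so $0$ remains a regular singular point. At any other point $t_0\in\mathbb{P}^1$, I would use that $t^{\alpha}$ is locally holomorphic and nonvanishing, with $t^{\alpha}$ behaving as $\tau^{-\alpha}$ in $\tau=1/t$ at $\infty$. By the conjugation formula, a fundamental system of $\mathcal{P}(\alpha)$ near $t_0$ is $t^\alpha$ times one of $\mathcal{P}$, so it still has moderate growth in sectors. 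Fuchs' criterion then gives that $t_0$ is regular singular or ordinary. This is the only mildly delicate step; alternatively, one can check the pole-order condition on the coefficients directly after converting $\Theta=t\,d/dt$.

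Finally, for the exponents, set $t=0$ in the expanded form. The indicial polynomial of $\mathcal{P}(\alpha)$ is $\mathcal{P}(X-\alpha,0)=\prod_i(X-\alpha-\alpha_i)$, whose roots are $\alpha_i+\alpha$ with the same multiplicities.
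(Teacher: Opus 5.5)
Your corrected argument is right and is essentially the paper's proof: the identity $(\Theta-\alpha)\circ t^{\alpha}=t^{\alpha}\circ\Theta$, which is the paper's Leibniz-rule step, gives $\mathcal{P}(\alpha)(t^{\alpha}y)=t^{\alpha}\mathcal{P}y=0$, and substituting $\Theta-\alpha$ into the $\Theta$-form gives Fuchsianity and the indicial polynomial $\mathcal{P}(X-\alpha,0)$. The paper calls that last part immediate, whereas you spell it out, including the points $t_0\neq 0$. Delete your first conjugation $t^{\alpha}\circ(\Theta-\alpha)\circ t^{-\alpha}=\Theta$: it is false (it equals $\Theta-2\alpha$), and your corrected computation supersedes it.
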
 
	
	\begin{proof}
		If the operator $\mathcal{P}$ is given as in (\ref{eq:theta-form}), then $$\mathcal{P}(\alpha)=
		\mathcal{P}(\Theta-\alpha,t)=(\Theta-\alpha)^n+p_{n-1}(t)(\Theta-\alpha)^{n-1}+\cdots+ p_{1}(t)(\Theta-\alpha)+p_0(t)
		$$
		It immediately follows that $\mathcal{P}(\alpha)$ is Fuchsian and the local exponents have the desired form.
		
		Since $\Theta t^\alpha=\alpha t^\alpha$, the Leibniz rule gives $\left(\Theta-\alpha\right)t^\alpha f(t)=t^\alpha \Theta f(t)$ for any differentiable function $f(t)$. Consequently, $\left(\Theta-\alpha\right)^k t^\alpha y(t)=t^\alpha \Theta^k y(t)$ and 
		\mbox{$\mathcal{P}(\alpha)t^\alpha y(t)=\mathcal{P}(\Theta-\alpha,t)t^\alpha y(t)=t^{\alpha}\cdot \mathcal{P}(\Theta,t)y(t)=0$.}
	\end{proof}
	
	A shift $\mathcal{P}(\alpha)$ of a geometric operator $\mathcal{P}$ is not necessarily geometric. For example if $\alpha\not\in\mathbb{Q}$, the local monodromy of $\mathcal{P}(\alpha)$ at $0$ is not quasi-unipotent, hence not geometric by the Local Monodromy Theorem. In the next section we examine how shifting local exponents affects the existence of a symplectic structure.

\section{Shifts of symplectic operators}\label{sec:sp-shifts}

The standard symplectic form, which is invariant under the action of $\mathrm{Sp}(4,\mathbb{C})$, is given by the matrix
$$\begin{pmatrix}
	0 & 1 & 0 & 0\\
	-1 & 0 & 0 & 0\\
	0 & 0 & 0 & 1\\
	0 & 0 & -1 & 0
\end{pmatrix}$$
A matrix is \emph{symplectic} if it is conjugate to an element of $\mathrm{Sp}(4,\mathbb{Z})$. We call a Fuchsian operator \emph{symplectic} if its monodromy group is conjugate to a Zariski-dense subgroup of $\mathrm{Sp}(4,\mathbb{Z})$. We make free and implicit use of the fact that $\mathrm{Sp}(4,\mathbb{Z})$ is Zariski-dense in $\mathrm{Sp}(4,\mathbb{Q})$ and $\mathrm{Sp}(4,\mathbb{Q})$ is Zariski-dense in $\mathrm{Sp}(4,\mathbb{C})$.

\begin{lemma}\label{lem:1/4}
Let $\alpha\in\mathbb{C}$. If local monodromy at $0$ of both $\mathcal{P}$ and $\mathcal{P}(\alpha)$ is symplectic, then $4\alpha\in\mathbb{Z}$.
\end{lemma}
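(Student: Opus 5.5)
By Lemma \ref{lem:shift}, $t^\alpha y(t)$ runs over the solutions of $\mathcal{P}(\alpha)=0$ as $y$ runs over those of $\mathcal{P}=0$. Continuing $t^\alpha$ once counterclockwise around $0$ multiplies it by $e^{2\pi i\alpha}$. So, after identifying the two solution spaces near the base point via $y\mapsto t^\alpha y$, the local monodromy of $\mathcal{P}(\alpha)$ at $0$ is $\lambda M_0$ with $\lambda:=e^{2\pi i\alpha}$. The plan is to show that if $M_0$ and $\lambda M_0$ are both symplectic, then $\lambda^4=1$, i.e. $4\alpha\in\mathbb{Z}$.

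\textbf{Determinant.} A symplectic matrix is conjugate to an element of $\mathrm{Sp}(4,\mathbb{Z})\subset\mathrm{SL}(4,\mathbb{C})$, so it has determinant $1$. Hence
$$1=\det(\lambda M_0)=\lambda^4\det M_0=\lambda^4 .$$
This already gives $e^{8\pi i\alpha}=1$, so $4\alpha\in\mathbb{Z}$.

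\textbf{Eigenvalues, as a cross-check.} The argument above needs only the determinant. One can also use that a symplectic matrix is conjugate to an integer matrix, so its characteristic polynomial lies in $\mathbb{Z}[x]$ and its eigenvalues are algebraic integers; with quasi-unipotence they are roots of unity. Moreover, the eigenvalue multiset of a symplectic matrix is closed under $\mu\mapsto\mu^{-1}$. If $\mu$ is an eigenvalue of $M_0$, then $\lambda\mu$ is an eigenvalue of $\lambda M_0$, so $\lambda$ is a quotient of roots of unity and hence rational. This is consistent with the determinant computation but is not needed.

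\textbf{Main obstacle.} The only delicate point is the identification of the local monodromy of $\mathcal{P}(\alpha)$ with $e^{2\pi i\alpha}M_0$. This requires the shift to be taken at the same point around which the monodromy is computed, and the branch of $t^\alpha$ to be fixed at the base point. With that convention the matrix of $M_0(\alpha)$ in the basis $\{t^\alpha y_j\}$ is exactly $e^{2\pi i\alpha}$ times the matrix of $M_0$ in the basis $\{y_j\}$. Equivalently, from the Frobenius description the eigenvalues $e^{2\pi i(\alpha_j+\alpha)}$ of $M_0(\alpha)$ have product $e^{2\pi i\sum\alpha_j}\cdot e^{8\pi i\alpha}$, which gives the same conclusion directly from the local exponents.
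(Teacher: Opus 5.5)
Your proof is correct and is essentially the paper's argument. Via Lemma \ref{lem:shift} you identify the local monodromy of $\mathcal{P}(\alpha)$ at $0$ with $e^{2\pi i\alpha}M_0$, and then use $\det=1$ for symplectic matrices to get $e^{8\pi i\alpha}=1$. Your optional ``cross-check'' paragraph has two slips: it imports quasi-unipotence, which is not a hypothesis of the lemma, and ``hence rational'' should refer to $\alpha$ rather than $\lambda$. The main argument does not depend on either.
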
	

\begin{proof}
Let $M_0$ be a matrix of local monodromy of $\mathcal{P}$ at $0$. By Lemma \ref{lem:shift} a matrix of local monodromy of $\mathcal{P}(\alpha)$ at $0$ is $e^{2\pi i\alpha}M_0$. Since $\mathrm{Sp}(4,\mathbb{C})\subset\mathrm{SL}(4,\mathbb{C})$, we must have
$1=\det(e^{2\pi i\alpha}M_0)=e^{8\pi i\alpha}\det(M_0)=e^{8\pi i\alpha}.$
\end{proof}

\begin{lemma}\label{lem:i}
Let $G_1=\langle M_0,M_1\cdots,M_n\rangle\subset\mathrm{Sp}(4,\mathbb{Q})$ be a finitely generated, Zariski-dense subgroup and put \mbox{$G_2:=\langle i\cdot M_0,M_1\cdots,M_n\rangle\subset\mathrm{GL}(4,\mathbb{C})$}. Then every $G_2$-invariant alternating form on $\mathbb{C}^4$ is degenerate.
\end{lemma}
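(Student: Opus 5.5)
The plan is to show that a finite-index subgroup of $G_2$ is Zariski-dense in $\mathrm{Sp}(4,\mathbb{C})$, so that any $G_2$-invariant alternating form must be a multiple of the standard form $J$. Then we check that $i\cdot M_0$ multiplies $J$ by $-1$, which forces the multiple to be zero.

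First I will locate $G_2$ inside a convenient group. Let $\mu_4=\{\pm1,\pm i\}$. Every generator of $G_2$ lies in $\mu_4\cdot\mathrm{Sp}(4,\mathbb{C})$, which is a group because scalars are central. Since $\mu_4\cap\mathrm{Sp}(4,\mathbb{C})=\{\pm1\}$, the map
$$\chi:\mu_4\cdot\mathrm{Sp}(4,\mathbb{C})\to\mu_4/\{\pm1\}\simeq\mathbb{Z}/2$$
is a well-defined homomorphism with kernel $\mathrm{Sp}(4,\mathbb{C})$. Concretely, $\chi(g)$ records whether $g^TJg=J$ or $g^TJg=-J$. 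Hence $H:=G_2\cap\mathrm{Sp}(4,\mathbb{C})$ has index at most $2$ in $G_2$. In fact the index is exactly $2$, since $(iM_0)^TJ(iM_0)=-M_0^TJM_0=-J$.

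Second, I will show that $H$ is Zariski-dense in $\mathrm{Sp}(4,\mathbb{C})$; I expect this to be the main point requiring care. Consider the projection
$$\pi:\mu_4\cdot\mathrm{Sp}(4,\mathbb{C})\to\mathrm{PGL}(4,\mathbb{C}).$$
Because $iM_0$ and $M_0$ have the same image, $\pi(G_2)=\pi(G_1)$, and this is Zariski-dense in $\pi(\mathrm{Sp}(4,\mathbb{C}))=\mathrm{PSp}(4,\mathbb{C})$ since $G_1$ is Zariski-dense in $\mathrm{Sp}(4,\mathbb{C})$. The subgroup $\pi(H)$ has finite index in $\pi(G_2)$, so its Zariski closure has finite index in $\mathrm{PSp}(4,\mathbb{C})$. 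As $\mathrm{PSp}(4,\mathbb{C})$ is connected, $\pi(H)$ is itself Zariski-dense. Now let $\overline{H}\subset\mathrm{Sp}(4,\mathbb{C})$ be the Zariski closure of $H$. Then $\pi(\overline{H})$ is closed, since $\pi|_{\mathrm{Sp}(4,\mathbb{C})}$ is a finite quotient map, and it contains $\pi(H)$. Therefore $\pi(\overline{H})=\mathrm{PSp}(4,\mathbb{C})$, so $\overline{H}\cdot\{\pm1\}=\mathrm{Sp}(4,\mathbb{C})$. Consequently $\overline{H}$ has index at most $2$ in the connected group $\mathrm{Sp}(4,\mathbb{C})$, so $\overline{H}=\mathrm{Sp}(4,\mathbb{C})$.

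Finally, let $\omega$ be a $G_2$-invariant alternating form. The condition $g^T\omega g=\omega$ is Zariski-closed in $g$ and holds on $H$, so $\omega$ is $\mathrm{Sp}(4,\mathbb{C})$-invariant. The space of $\mathrm{Sp}(4,\mathbb{C})$-invariants in $\Lambda^2(\mathbb{C}^4)^*$ is one-dimensional, spanned by $J$: indeed $\Lambda^2$ of the standard representation splits as the trivial line plus an irreducible $5$-dimensional representation. Hence $\omega=cJ$ for some $c\in\mathbb{C}$. Invariance under $iM_0$ then gives
$$cJ=(iM_0)^T(cJ)(iM_0)=-c\,M_0^TJM_0=-cJ,$$
so $c=0$. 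Thus every $G_2$-invariant alternating form is zero, and in particular degenerate.
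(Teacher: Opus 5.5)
Your proof is correct, and it takes a genuinely different route from the paper's.

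The paper argues by contradiction from a hypothetical nondegenerate $G_2$-invariant form $\Omega_2$:
\begin{itemize}
\item it replaces $\Omega_2$ by a rational one, using $M_i\in\mathrm{Sp}(4,\mathbb{Q})$;
\item it uses complex conjugation on the pencil $\Omega_1+it\Omega_2$ to force $b=-l$;
\item it normalizes by an explicit symplectic change of basis, treating only $c\neq 0$ in detail;
\item it observes that the radicals of the two degenerate members of the pencil are two complementary planes invariant under $\langle M_1,\dots,M_n\rangle$, which $M_0$ must interchange.
\end{itemize}
Hence every element of $G_1$ is block-diagonal or block-antidiagonal, which contradicts Zariski-density.

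You instead pass to the index-two subgroup $H=G_2\cap\mathrm{Sp}(4,\mathbb{C})$, show it is Zariski-dense, and use that the only $\mathrm{Sp}(4,\mathbb{C})$-invariant alternating forms are multiples of $J$, which $iM_0$ negates. This last step can also be done with Schur's lemma, since $J^{-1}\omega$ commutes with the irreducible action of $\mathrm{Sp}(4,\mathbb{C})$.

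What your approach buys:
\begin{itemize}
\item it is shorter and avoids both the case split and the explicit matrix computations;
\item it never uses that $G_1$ is defined over $\mathbb{Q}$, so it works for any Zariski-dense $G_1\subset\mathrm{Sp}(4,\mathbb{C})$;
\item it proves more: the only $G_2$-invariant alternating form is $0$, not merely a degenerate one.
\end{itemize}

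What the paper's computation buys is an explicit description of the obstruction: a nondegenerate invariant form would force $G_1$ to preserve an unordered pair of complementary planes. The paper uses exactly this after Corollary 1 to replace Zariski-density by the existence of a MUM point (Corollary 2). A regular unipotent element cannot have the block-diagonal or block-antidiagonal shape. Your argument depends essentially on the density of $H$ to upgrade invariance to all of $\mathrm{Sp}(4,\mathbb{C})$, so it does not directly give that variant.
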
	

\begin{proof}
Let $\Omega_1$ be the matrix of the standard symplectic form and let $\Omega_2$ be the matrix of a $G_2$-invariant alternating form. Let $\Omega=(a_{i,j})_{i,j=1,\cdots,4}$, $a_{i,j}=-a_{j,i}$, be a generic skew-symmetric matrix of variables. Then $\Omega_1$ is a solution of the linear system of equations $M_i^T\Omega M_i=\Omega$, \mbox{$i=0,\cdots,n$,} while $\Omega_2$ is a solution of the system $M_i^T\Omega M_i=\Omega$ for $i=1,\cdots,n$ and $\Omega=(i\cdot M_0)^T\Omega(i\cdot M_0)=-M_0^T\Omega M_0$. The matrices $M_i$ have rational coefficients, thus rational solutions of these systems are dense in the space of all complex solutions. It follows that if there is a non-degenerate solution of the second system, there is a non-degenerate rational solution. Assume that $\Omega_2\in\mathrm{GL}(4,\mathbb{Q})$ and denote the entries of $\Omega_2$ as follows:
$$
\Omega_2=\begin{pmatrix}
0 & b & c & d\\
-b & 0 & g & h\\
-c & -g & 0 & l\\
-d & -h & -l & 0
\end{pmatrix}\quad\textnormal{with}\quad b,c,d,g,h,l\in\mathbb{Q},\quad\det\Omega_2=(-bl + ch - dg)^2\neq 0
$$

Let $\Omega_t:=\Omega_1+it\cdot\Omega_2$. For any $q\in\mathbb{Q}$ we have $\overline{\Omega_{q}}=\overline{\Omega_1+iq\cdot\Omega_2}=\overline{\Omega_1}+\overline{iq\cdot\Omega_2}=\Omega_1-iq\cdot\Omega_2=\Omega_1+iq\cdot(-\Omega_2)=M_0^T\Omega_1 M_0+iq M_0^T\Omega_2 M_0=M_0^T\Omega_{q}M_0$, where $\overline{(\ \  )}$ denotes the complex conjugation. It follows that $\overline{\det\Omega_{q}}=\det\overline{\Omega_{q}}=\det(M_0^T\Omega_{q}M_0)=\det M_0^2\det\Omega_{q}=\det\Omega_{q}$, because $M_0$ is symplectic. Thus $\det\Omega_t$, considered as a polynomial in $t$, has rational coefficients. On the other hand, we have
\vspace{-1mm}
\begin{equation*}
\det\Omega_t=(-bl + ch - dg)^2t^4 + 2i(b + l)(-bl + ch - dg)t^3 + (-2bl + 2ch - 2gd + i(b + l)^2)t^2 + i(2b + 2l)t+1
\vspace{-1mm}
\end{equation*}
Since the parameters are rational, the coefficients of $t$ and $t^3$ must vanish, i.e. $b=-l$.
 
We may further reduce the number of parameters by a symplectic change of basis, i.e. replacing $\Omega_2$ by $Q^T\Omega_2 Q$ for some $Q\in\mathrm{Sp}(4,\mathbb{Q})$. For this we assume $c\neq 0$. The proof in the case $c=0$ is essentially the same, differing only in the details of computations.

Put
$$
Q:=\begin{pmatrix}
	1 & -\frac{g}{c} & 0 & \frac{l}{c}\\
	\frac{g}{c} & 1 & 0 & 0\\
	0 & \frac{l}{c} & 1 & -\frac{d}{c}\\
	0 & 0 & 0 & 1
\end{pmatrix}\in\mathrm{Sp}(4,\mathbb{Q})
$$
and $a:=\frac{l^2+ch-dg}{c}$. We redefine $G_i:=Q^{-1}G_iQ$ for $i=1,2$, $M_i:=Q^{-1}M_iQ$ for $i=0,\cdots,n$,
$$
\Omega_2:=Q^T\Omega_2 Q=\begin{pmatrix}
	0 & 0 & c & 0\\
	0 & 0 & 0 & a\\
	-c & 0 & 0 & 0\\
	0 & -a & 0 & 0
\end{pmatrix}\quad\textnormal{with}\quad \det\Omega_2=(ac)^2\neq 0
$$
and $\Omega_t:=\Omega_1+t\cdot\Omega_2$ with $\det\Omega_t=(at^2-1)^2$. Let $t_1=\frac{1}{\sqrt{a}},t_2=-\frac{1}{\sqrt{a}}$ be the roots of $\det\Omega_t=0$.

Let $G_3:=\langle M_1,\cdots,M_n\rangle\subset G_1\cap G_2\subset\mathrm{Sp}(4,\mathbb{Q})$. Both $\Omega_1$ and $\Omega_2$ are $G_3$-invariant, thus all forms $\Omega_t$ are $G_3$-invariant as well. In particular, the radical $R_t:=\{v\in\mathbb{C}^4: \Omega_tv=0\}$ is a $G_3$-invariant subspace. For $t=t_1,t_2$ this gives two non-trivial $G_3$-invariant subspaces
$$
R_{t_1}=\operatorname{span}\left\{
e_1:=\begin{pmatrix}
\sqrt{a}\\
0\\
0\\
c
\end{pmatrix},\ 
f_1:=\begin{pmatrix}
0\\
c\\
-\sqrt{a}\\
0	
\end{pmatrix}
\right\},\quad\quad
R_{t_2}=\operatorname{span}\left\{
e_2:=\begin{pmatrix}
-\sqrt{a}\\
0\\
0\\
c
\end{pmatrix},\ 
f_2:=\begin{pmatrix}
0\\
c\\
\sqrt{a}\\
0	
\end{pmatrix}
\right\}
$$

Clearly $e_1,f_1,e_2,f_2$ are linearly independent. Let $N:=[e_1,f_1,e_2,f_2]\in\mathrm{GL}(4,\mathbb{C})$. A direct computation shows that $N^T\Omega_1 N=2\sqrt{a}c\cdot\Omega_1$, so $N$ lies in the normalizer of $\mathrm{Sp}(4,\mathbb{C})$. Thus we may change the basis \mbox{via $N$} and still have $G_1:=N^{-1}G_1N\subset\mathrm{Sp}(4,\mathbb{C})$. After this change of basis the matrix
$$\Omega_2:=\frac{1}{-2ac}N^T\Omega_2N=\begin{pmatrix}
0 & 1 & 0 & 0\\
-1 & 0 & 0 & 0\\
0 & 0 & 0 & -1\\
0 & 0 & 1 & 0
\end{pmatrix}$$
represents an alternating form invariant under the action of $G_2:=N^{-1}G_2N$. Furthermore, the elements of $G_3:=N^{-1}G_3N$ all have the block form
\begin{equation}\label{eq:block-form}
\begin{pmatrix}
	A & 0_{2,2} \\
	0_{2,2} & B \\
\end{pmatrix}\quad\textnormal{for some}\quad A,B\in\mathrm{GL}(2,\mathbb{C})
\end{equation}
because the subspaces generated by $e_1,f_1$ and $e_2,f_2$ were $G_3$-invariant.

Put $M_i:=N^{-1}M_iN$ for $i=0,\cdots,n$. Recall that we have $M_0^T\Omega_1M_0=\Omega_1$ and $M_0^T\Omega_2M_0=-\Omega_2$. Since the matrices $\Omega_1$ and $\Omega_2$ are now explicitly given, we can solve this system of equations treating the coefficients of $M_0=\left(m_{i,j}\right)_{i,j=1,\cdots,4}$ as unknown. Write
$$M_0^T\Omega_1M_0-\Omega_1=\left(x_{i,j}\right)_{i,j=1,\cdots,4}\quad\textnormal{ and }\quad M_0^T\Omega_2M_0+\Omega_2=\left(y_{i,j}\right)_{i,j=1,\cdots,4}$$
We have $x_{i,j}=y_{i,j}=0$ for all $i,j=1,\cdots,4$, so the following equalities must also hold:
$$
\begin{cases}
\frac{1}{2}(x_{1,2}+y_{1,2})=m_{1,1}m_{2,2} - m_{1,2}m_{2,1}=0\\
\frac{1}{2}(x_{1,3}+y_{1,3})=m_{1,1}m_{3,2} - m_{1,2}m_{3,1}=0\\
\frac{1}{2}(x_{1,4}+y_{1,4})=m_{1,1}m_{4,2} - m_{1,2}m_{4,1}=0\\
\frac{1}{2}(x_{2,3}+y_{2,3})=m_{2,1}m_{3,2} - m_{2,2}m_{3,1}=0\\
\frac{1}{2}(x_{2,4}+y_{2,4})=m_{2,1}m_{4,2} - m_{2,2}m_{4,1}=0\\
\frac{1}{2}(x_{3,4}+y_{3,4})+1=m_{3,1}m_{4,2} - m_{3,2}m_{4,1}=1\\
\end{cases}
\begin{cases}
	\frac{1}{2}(x_{3,4}-y_{3,4})=m_{3,3}m_{4,4} - m_{3,4}m_{4,3}=0\\
	\frac{1}{2}(x_{2,4}-y_{2,4})=m_{2,3}m_{4,4} - m_{2,4}m_{4,3}=0\\
	\frac{1}{2}(x_{2,3}-y_{2,3})=m_{2,3}m_{3,4} - m_{2,4}m_{3,3}=0\\
	\frac{1}{2}(x_{1,4}-y_{1,4})=m_{1,3}m_{4,4} - m_{1,4}m_{4,3}=0\\
	\frac{1}{2}(x_{1,3}-y_{1,3})=m_{1,3}m_{3,4} - m_{1,4}m_{3,3}=0\\
	\frac{1}{2}(x_{1,2}-y_{1,2})+1=m_{1,3}m_{2,4} - m_{1,4}m_{2,3}=1\\
\end{cases}
$$
Solving this system of equations we find that
$m_{1,1}=m_{2,2}=m_{1,2}=m_{2,1}=m_{3,3}=m_{4,4}=m_{3,4}=m_{4,3}=0$
and so $M_0$ must have the block form
\begin{equation}\label{eq:block-form0}
	\begin{pmatrix}
		0_{2,2} & A \\
		B & 0_{2,2} \\
	\end{pmatrix}\quad\textnormal{for some}\quad A,B\in\mathrm{GL}(2,\mathbb{C})
\end{equation}

If $X,Y\in\mathrm{GL}(4,\mathbb{C})$ are matrices of the form (\ref{eq:block-form}) or (\ref{eq:block-form0}), so are $X^{-1}$ and $XY$. It follows that every $M\in G_1$ has the block form (\ref{eq:block-form}) or (\ref{eq:block-form0}). But the set of such matrices is not \mbox{Zariski-dense in $\mathrm{Sp}(4,\mathbb{C})$.}
\end{proof}

Now we are ready to give a complete characterization of shifts preserving the symplectic structure.

\begin{theorem}\label{th:1/2}
Let $\mathcal{P}$ be a symplectic operator and let $\alpha\in\mathbb{C}$. Then $\mathcal{P}(\alpha)$ is symplectic if and only if $2\alpha\in\mathbb{Z}$.

Furthermore, if $2\alpha\in\mathbb{Z}$, then $Mon(\mathcal{P})$ is arithmetic if and only if $Mon\left(\mathcal{P}(\alpha)\right)$ is arithmetic.
\end{theorem}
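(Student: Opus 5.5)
We compare the monodromy of $\mathcal{P}(\alpha)$ directly with that of $\mathcal{P}$, using Lemma \ref{lem:shift}. Fix a base point $b\neq 0$ and generators of $\pi_1(\mathbb{P}^1\setminus\Sigma,b)$, where $\Sigma\cup\{0,\infty\}$ is the singular set; if needed, enlarge $\Sigma$ by $0$ and $\infty$ with trivial local monodromy. Multiplication by $t^\alpha$ identifies the solution spaces near $b$. Continuation of $t^\alpha y$ along a loop $\gamma$ equals $e^{2\pi i\alpha\, w_0(\gamma)}M_\gamma(y)$, where $w_0(\gamma)$ is the winding number of $\gamma$ around $0$. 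Hence in this identification $M'_0=e^{2\pi i\alpha}M_0$ and $M'_s=M_s$ for $s\neq 0,\infty$. The local monodromy at $\infty$ is then multiplied by $e^{-2\pi i\alpha}$, but $M'_\infty$ is determined by the relation $\prod M'_s=\operatorname{Id}$ anyway. So $Mon(\mathcal{P}(\alpha))=\langle e^{2\pi i\alpha}M_0,M_1,\dots,M_n\rangle$, with $M_1,\dots,M_n$ the remaining generators.

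\emph{The ``if'' direction.} If $2\alpha\in\mathbb{Z}$, then $e^{2\pi i\alpha}=\pm1$, so $Mon(\mathcal{P}(\alpha))$ is generated by $\pm M_0,M_1,\dots,M_n$. Since $-\operatorname{Id}\in\mathrm{Sp}(4,\mathbb{Z})$ and is central, both $Mon(\mathcal{P}(\alpha))$ and $Mon(\mathcal{P})$ lie in $\mathrm{Sp}(4,\mathbb{Z})$ in the same basis. Both are contained in $H:=\langle Mon(\mathcal{P}),-\operatorname{Id}\rangle$, and each contains the other up to the factor $\pm\operatorname{Id}$. Thus each has index at most $2$ in $H$, since $H=G\cup(-G)$ for either group $G$. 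The same relation gives the second claim: Zariski-density passes between the two groups because the identity component of $\mathrm{Sp}(4,\mathbb{C})$ is the whole group and a finite-index subgroup of a Zariski-dense group is Zariski-dense. Arithmeticity passes between them because finite index is preserved under index-$2$ overgroups and subgroups.

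\emph{The ``only if'' direction.} Assume $\mathcal{P}(\alpha)$ is symplectic. First, local monodromy at $0$ is symplectic for both operators, so Lemma \ref{lem:1/4} gives $4\alpha\in\mathbb{Z}$. If $2\alpha\notin\mathbb{Z}$, then $e^{2\pi i\alpha}=\pm i$. Replacing $\alpha$ by $\alpha+1$ changes nothing, so after possibly conjugating we may take $Mon(\mathcal{P}(\alpha))=\langle iM_0,M_1,\dots,M_n\rangle$, replacing $iM_0$ by $(-i)M_0=i(-M_0)$ if necessary and noting that $\langle -M_0,M_1,\dots\rangle$ is still Zariski-dense in $\mathrm{Sp}$ by the argument above. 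Conjugating $Mon(\mathcal{P})$ into $\mathrm{Sp}(4,\mathbb{Z})\subset\mathrm{Sp}(4,\mathbb{Q})$, Lemma \ref{lem:i} says every invariant alternating form of $\langle iM_0,M_1,\dots,M_n\rangle$ is degenerate. But a group conjugate into $\mathrm{Sp}(4,\mathbb{Z})$ preserves a non-degenerate alternating form, which is a contradiction. Hence $2\alpha\in\mathbb{Z}$.

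The main obstacle is the bookkeeping at $\infty$ and of the base point: one must check that the identification via $t^\alpha$ leaves the other generators unchanged. The cleanest way is to choose loops around $s\neq0$ that do not wind around $0$, and to let $\infty$ be the dependent generator. A minor remaining point is that Lemma \ref{lem:i} is stated for exactly the factor $i$; the $-i$ case reduces to it by the sign trick above.
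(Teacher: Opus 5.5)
Your proposal is correct and follows essentially the same route as the paper. Both arguments rest on the description $Mon(\mathcal{P}(\alpha))=\langle e^{2\pi i\alpha}M_0,M_1,\dots,M_n\rangle$ (you use winding numbers where the paper uses a ray cut), on Lemma~\ref{lem:1/4} and Lemma~\ref{lem:i} for the ``only if'' direction, and on the index-$\le 2$ overgroup $\langle -\operatorname{Id},Mon(\mathcal{P})\rangle$ for the ``if'' direction and for arithmeticity. Your connectedness argument for Zariski-density and your reduction of the $-i$ case via $-iM_0=i(-M_0)$ simply fill in details the paper leaves implicit.
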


\begin{proof}
Let $y_1,\cdots,y_4$ be a symplectic basis of solutions of $\mathcal{P}=0$ near $0$. Let $M_0$ be the matrix of local monodromy of $\mathcal{P}$ at $0$ and let $M_1,\cdots,M_n$ be the matrices of local monodromies at non-zero finite singularities, so that $G_1:=Mon(\mathcal{P})=\langle M_0,M_1,\cdots,M_n\rangle\subset\mathrm{Sp}(4,\mathbb{Z})$.

By Lemma \ref{lem:shift} functions $t^{\alpha}y_1,\cdots,t^{\alpha}y_4$ form a basis of solutions of $\mathcal{P}(\alpha)=0$ near $0$. In this basis local monodromy of $\mathcal{P}(\alpha)$ at $0$ is given by the matrix $e^{2\pi i\alpha}M_0$. Furthermore, matrices of local monodromies at other finite singularities are $M_1,\cdots,M_n$. Indeed, for any ray $R$ the function $t^\alpha$ is holomorphic on $\mathbb{P}^1\setminus R$. We may arrange so that $R$ contains no non-zero finite singularities and then compute their local monodromies using loops from \mbox{$\pi_1(\mathbb{P}^1\setminus\left( R\cup\Sigma\right),b)$.} Since holomorphic functions have trivial monodromy, it follows that the monodromy action on $t^{\alpha}y_1,\cdots,t^{\alpha}y_4$ is the same as the monodromy action on $y_1,\cdots,y_4$. Consequently $G_2:=Mon\left(\mathcal{P}(\alpha)\right)=\langle e^{2\pi i\alpha}M_0,M_1,\cdots,M_n\rangle\subset\mathrm{GL}(4,\mathbb{C})$.

Assume that $\mathcal{P}(\alpha)$ is symplectic. Then $4\alpha\in\mathbb{Z}$ by Lemma \ref{lem:1/4}. If $\alpha=\tfrac{2k+1}{4}$, $k\in\mathbb{Z}$, then
$e^{2\pi i\alpha}=e^{\tfrac{(2k+1)\pi i}{2}}=e^{k\pi i}e^{\tfrac{\pi i}{2}}=\pm i$
and $\mathcal{P}(\alpha)$ cannot be symplectic by Lemma \ref{lem:i}. Thus $2\alpha\in\mathbb{Z}$.

Now assume $2\alpha\in\mathbb{Z}$, so that $G_2=\langle e^{2\pi i\alpha}M_0,M_1,\cdots,M_n\rangle=\langle \pm M_0,M_1,\cdots,M_n\rangle\subset\mathrm{Sp}(4,\mathbb{Z})$. Since $G_1$ is Zariski-dense, so is $G_2$. Thus $P(\alpha)$ is symplectic. Put $G:=\langle-\operatorname{Id},M_0,M_1,\cdots,M_n\rangle\subset\mathrm{Sp}(4,\mathbb{Z})$. For $i=1,2$ we have $G_i\subset G$, $[G:G_i]\leq 2$ and $[\mathrm{Sp}(4,\mathbb{Z}):G_i]=[\mathrm{Sp}(4,\mathbb{Z}):G][G:G_i]$. It follows that $G_i$ is arithmetic if and only if $G$ is arithmetic.
\end{proof}

If $\mathcal{P}$ is a geometric operator, monodromy-invariance of the cup product implies that we can assume \mbox{$Mon(\mathcal{P})\subset\mathrm{Sp}(4,\mathbb{Z})$}. Thus we have the following corollary.
\begin{corollary}\label{cor:geometric-shift}
Let $\mathcal{P}$ be a geometric operator and fix $\alpha\in\mathbb{C}\setminus \frac{1}{2}\mathbb{Z}$. Assume that the monodromy group \mbox{$Mon(\mathcal{P})\subset\mathrm{Sp}(4,\mathbb{Z})$} is Zariski-dense. Then $\mathcal{P}(\alpha)$ is not geometric.
\end{corollary}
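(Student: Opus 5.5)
The plan is to argue by contradiction and reduce the statement directly to Theorem \ref{th:1/2}. Suppose $\mathcal{P}(\alpha)$ were geometric. First, observe that $\mathcal{P}$ is symplectic in the sense of Section \ref{sec:sp-shifts}. Since $\mathcal{P}$ is geometric, the cup product on $H^3(X_b,\mathbb{Z})$ gives a monodromy-invariant lattice and a non-degenerate alternating form. Hence, after a choice of symplectic basis, $Mon(\mathcal{P})\subset\mathrm{Sp}(4,\mathbb{Z})$. Together with the hypothesis that this subgroup is Zariski-dense, this is exactly the definition of a symplectic operator.

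Second, we must see that $\mathcal{P}(\alpha)$, being geometric, would also be symplectic. By the same cup product argument applied to the family realizing $\mathcal{P}(\alpha)$, its monodromy group is conjugate to a subgroup of $\mathrm{Sp}(4,\mathbb{Z})$. The only extra point is Zariski-density, which the definition of symplectic operator demands. Here I would not try to prove density for $Mon(\mathcal{P}(\alpha))$ directly. Instead, I would note that the proof of Theorem \ref{th:1/2}, in the direction ``$\mathcal{P}(\alpha)$ symplectic $\Rightarrow 2\alpha\in\mathbb{Z}$'', only uses two facts about $Mon(\mathcal{P}(\alpha))$:
\begin{itemize}
\item its local monodromy at $0$ has determinant $1$ (Lemma \ref{lem:1/4});
\item it preserves some non-degenerate alternating form (Lemma \ref{lem:i}).
\end{itemize}
The Zariski-density hypothesis in Lemma \ref{lem:i} is imposed only on $G_1=Mon(\mathcal{P})$, which we have. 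Both facts hold for any geometric operator, since its monodromy preserves the cup product form.

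So the argument runs as follows. By Lemma \ref{lem:shift} and the proof of Theorem \ref{th:1/2}, $Mon(\mathcal{P}(\alpha))=\langle e^{2\pi i\alpha}M_0,M_1,\dots,M_n\rangle$. Since the local monodromy at $0$ of both operators is symplectic, Lemma \ref{lem:1/4} gives $4\alpha\in\mathbb{Z}$. As $\alpha\notin\frac12\mathbb{Z}$, we get $e^{2\pi i\alpha}=\pm i$. Lemma \ref{lem:i}, applied with $M_0$ replaced by $\pm M_0$ (still in $\mathrm{Sp}(4,\mathbb{Q})$, with the generated group still Zariski-dense), then says every invariant alternating form of $Mon(\mathcal{P}(\alpha))$ is degenerate. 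This contradicts the non-degenerate invariant cup product form coming from the geometric origin of $\mathcal{P}(\alpha)$.

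The main obstacle is this bookkeeping about Zariski-density. The statement of Theorem \ref{th:1/2} formally requires density for both operators, and I handle it by invoking the lemmas rather than the theorem itself. A minor point to check is that a basis change is harmless: the symplectic basis of solutions for the shifted family may differ from $t^\alpha y_1,\dots,t^\alpha y_4$. This does not matter, because existence of a non-degenerate invariant alternating form is conjugation-invariant.
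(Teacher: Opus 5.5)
Your proof is correct and is essentially the paper's argument: the paper deduces the corollary directly from Theorem~\ref{th:1/2}, and the ``only if'' direction of that theorem is exactly your combination of Lemma~\ref{lem:1/4} and Lemma~\ref{lem:i} applied to $\langle e^{2\pi i\alpha}M_0,M_1,\dots,M_n\rangle$. Rerunning that direction instead of citing the theorem is a sound way to avoid checking Zariski-density of $Mon(\mathcal{P}(\alpha))$, which the paper passes over silently (density does in fact hold, since for rational $\alpha$ this group has finite index in $\langle e^{2\pi i\alpha}\operatorname{Id}\rangle\cdot Mon(\mathcal{P})$).
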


Examples of geometric operators whose monodromy group is not Zariski-dense can be found in \cite{non-dense1,non-dense2}. Their monodromy matrices have block-diagonal form as in (\ref{eq:block-form}). These were also the first examples of geometric operators without points of maximal unipotent monodromy \mbox{(MUM points),} \mbox{i.e. regular} singular points whose local monodromy operator has one Jordan block. Note that local monodromy at a MUM point can never be represented by a matrix as in (\ref{eq:block-form}) or (\ref{eq:block-form0}). It follows that in the proof of Theorem \ref{th:1/2} the assumption of Zariski-density can be replaced by the assumption that the operator has a MUM point.

\begin{corollary}\label{cor:mum}
Let $\mathcal{P}$ be a geometric operator and fix $\alpha\in\mathbb{C}\setminus \frac{1}{2}\mathbb{Z}$. Assume that $\mathcal{P}$ has a MUM singularity. Then $\mathcal{P}(\alpha)$ is not geometric.
\end{corollary}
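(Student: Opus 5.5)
We argue by contradiction: assume $\mathcal{P}(\alpha)$ is geometric, and rerun the proof of Theorem \ref{th:1/2}, with Zariski-density replaced by the presence of a MUM point, as the remark before the statement suggests.

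First, geometricity of $\mathcal{P}$ gives a symplectic basis $y_1,\dots,y_4$ of solutions near $0$. In this basis $G_1:=Mon(\mathcal{P})=\langle M_0,M_1,\dots,M_n\rangle\subset\mathrm{Sp}(4,\mathbb{Z})$. As in the proof of Theorem \ref{th:1/2}, Lemma \ref{lem:shift} shows that $G_2:=Mon(\mathcal{P}(\alpha))=\langle e^{2\pi i\alpha}M_0,M_1,\dots,M_n\rangle$. If $\mathcal{P}(\alpha)$ were geometric, its local monodromy at $0$ would be symplectic, so Lemma \ref{lem:1/4} gives $4\alpha\in\mathbb{Z}$. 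Since $\alpha\notin\frac12\mathbb{Z}$, we get $e^{2\pi i\alpha}=\pm i$. Replacing $\alpha$ by $\alpha+1$ if necessary, we may assume $e^{2\pi i\alpha}=i$. Moreover, $G_2$ would preserve the non-degenerate alternating form given by the cup product.

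Second, we rerun the argument of Lemma \ref{lem:i}. We check which steps used Zariski-density. The rationality reduction for $\Omega_2$ uses only that the $M_j$ are rational. The same holds for the computation $b=-l$, the symplectic change of basis $Q$, the construction of the radicals $R_{t_1},R_{t_2}$, and the change of basis by $N$. All of these go through verbatim. The conclusion is that, in a suitable basis, $M_1,\dots,M_n$ have the block-diagonal form (\ref{eq:block-form}) and $M_0$ has the anti-diagonal form (\ref{eq:block-form0}). Both forms are preserved under products and inverses, so every element of $G_1$ has one of the two forms. Density was used only at the very last line, to rule this out. Note also that $G_1\subset\mathrm{Sp}(4,\mathbb{Z})\subset\mathrm{Sp}(4,\mathbb{Q})$, as Lemma \ref{lem:i} requires.

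Third, we replace that last line by the MUM hypothesis. Let $s$ be a MUM point and $M_s\in G_1$ its local monodromy. Then $M_s$ is a single $4\times4$ Jordan block with eigenvalue $\lambda$; quasi-unipotence gives $\lambda$ a root of unity. If $M_s$ were block-diagonal as in (\ref{eq:block-form}), then $\mathbb{C}^4$ would split into two $M_s$-invariant $2$-dimensional subspaces. This contradicts $M_s$ having one Jordan block, since its invariant subspaces form a chain.

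If instead $M_s$ were anti-diagonal as in (\ref{eq:block-form0}), then $M_s^2$ would be block-diagonal. But $M_s^2$ is again a single Jordan block: $(M_s^2-\lambda^2)=(M_s-\lambda)(M_s+\lambda)$, and $M_s+\lambda$ is invertible since $\lambda\neq0$. So $M_s^2$ is non-derogatory, and we get the same contradiction. Hence $\mathcal{P}(\alpha)$ is not geometric.

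The main obstacle is the second step. One must confirm that no step of Lemma \ref{lem:i} before its final sentence silently used density. In particular, the case $c=0$ there is only sketched, and the existence of the nonzero $a$ and the roots $t_1,t_2$ needs checking. I would verify these directly, with density never invoked. If needed, I would note that the MUM element alone already forces $G_3$ and $M_0$ to be non-degenerate in the required sense.
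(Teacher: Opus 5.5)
Your proposal is correct and follows the paper's own argument: rerun the proofs of Theorem~\ref{th:1/2} and Lemma~\ref{lem:i}, where Zariski-density enters only in the final sentence. You replace that sentence by the observation that a MUM local monodromy is neither block-diagonal nor block anti-diagonal, and your justification via $M_s^2$ being again a single Jordan block is clean; the paper merely asserts this fact. One harmless slip: replacing $\alpha$ by $\alpha+1$ does not change $e^{2\pi i\alpha}$. No normalization is needed, however, since $(\pm iM_0)^T\Omega(\pm iM_0)=-M_0^T\Omega M_0$ for either sign, so the argument of Lemma~\ref{lem:i} applies verbatim to $-i$ as well.
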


Theorem \ref{th:1/2} can also be applied to more general shifts. Assume that $\mathcal{Q}$ is a shift of a symplectic \mbox{operator $\mathcal{P}$} by a function $f(t)$ such that $f(t)^n$ is rational for some $n\geq 1$. Then we can consider $\mathcal{Q}$ as obtained \mbox{from $\mathcal{P}$} by a sequence of M\"obius transformations and shifts of local exponents at $0$. M\"obius transformations do not affect the monodromy and shifts multiply appropriate generators by a scalar. If the subgroup of $Mon(\mathcal{P})$ generated by local monodromies around points not in the branching locus of $f(t)$ is Zariski-dense in $\mathrm{Sp}(4,\mathbb{Z})$, several applications of \mbox{Theorem \ref{th:1/2}} yield the following.

\begin{corollary}
	Let $\mathcal{P}$ be a symplectic operator, $f(t)$ a root of a rational function and \mbox{$\mathcal{Q}$ the shift of $\mathcal{P}$ by $f(t)$.}
	
	If $f(t)^2\in\mathbb{C}(t)$, then $\mathcal{Q}$ is symplectic and $Mon(\mathcal{P})$ is arithmetic if and only if $Mon(\mathcal{Q})$ is arithmetic.
	
	If $\mathcal{Q}$ is symplectic and $Mon(\mathcal{P})\cap Mon(\mathcal{Q})\subset \mathrm{Sp}(4,\mathbb{Z})$ is Zariski-dense, then $f(t)^2\in\mathbb{C}(t)$.
\end{corollary}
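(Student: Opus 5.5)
The plan is to reduce the statement to repeated applications of Theorem \ref{th:1/2}, working directly with monodromy matrices. Write $f(t)^m=R(t)\in\mathbb{C}(t)$ for some $m\geq 1$, and factor
$$R(t)=c\prod_j (t-s_j)^{k_j},\qquad k_j\in\mathbb{Z}.$$
Up to a constant, which does not change the operator, we may take
$$f(t)=\prod_j (t-s_j)^{\beta_j},\qquad \beta_j:=k_j/m\in\mathbb{Q}.$$
The branching locus of $f$ is contained in $\{s_j\}\cup\{\infty\}$. A shift by $(t-s_j)^{\beta_j}$ is a shift of local exponents at $s_j$. After the Möbius transformation $t\mapsto t-s_j$ it becomes exactly the shift $\mathcal{P}(\beta_j)$ of Lemma \ref{lem:shift}, and Möbius transformations do not change the monodromy group.

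Iterating the argument from the proof of Theorem \ref{th:1/2}, let $y_1,\dots,y_4$ be a symplectic basis near $b$. Then $fy_1,\dots,fy_4$ is a basis of solutions of $\mathcal{Q}=0$. Analytic continuation along a loop around a point $s$ multiplies $f$ by the scalar $\lambda_s:=e^{2\pi i\beta_s}$, where $\beta_s$ is the exponent of $f$ at $s$; this includes $s=\infty$ with $\beta_\infty=-\sum_j\beta_j$. Hence, for the standard generators $M_s$ of $Mon(\mathcal{P})$,
$$Mon(\mathcal{Q})=\langle \lambda_s M_s\rangle .$$
Moreover, $f^2\in\mathbb{C}(t)$ holds if and only if $2\beta_s\in\mathbb{Z}$ for all $s$, i.e. if and only if every $\lambda_s=\pm1$.

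\emph{First statement.} If $f^2\in\mathbb{C}(t)$, then every $\lambda_s\in\{\pm1\}$. So $Mon(\mathcal{Q})\subset G:=\langle -\operatorname{Id}, M_s\rangle$, and likewise $Mon(\mathcal{P})\subset G$. Since $-\operatorname{Id}$ is central and $(\pm\operatorname{Id})^2=\operatorname{Id}$, both subgroups have index at most $2$ in $G$. Zariski-density of $Mon(\mathcal{Q})$ follows because its Zariski closure has finite index in that of $G$, which contains $\mathrm{Sp}(4,\mathbb{C})$, a connected group; so the closure is all of $\mathrm{Sp}(4,\mathbb{C})$. The arithmeticity equivalence is then the same index computation as in Theorem \ref{th:1/2}.

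\emph{Second statement.} Assume $\mathcal{Q}$ is symplectic and $H:=Mon(\mathcal{P})\cap Mon(\mathcal{Q})$ is Zariski-dense. Suppose some $\lambda_{s_0}\neq\pm1$. I would take the following steps in order.
\begin{enumerate}
\item Apply Lemma \ref{lem:1/4} to the local monodromies at $s_0$. Both $M_{s_0}$ and $\lambda_{s_0}M_{s_0}$ are symplectic, since $\mathcal{Q}$ is symplectic and conjugation preserves determinants. Taking determinants gives $\lambda_{s_0}^4=1$, so $\lambda_{s_0}=\pm i$.
\item Set up the setting of Lemma \ref{lem:i}: a group $G_1=\langle M_{s_0},\dots\rangle\subset\mathrm{Sp}(4,\mathbb{Q})$ and a modified group $G_2$ in which $M_{s_0}$ is replaced by $iM_{s_0}$.
\item Remove the other scalars. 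There may be several $s$ with $\lambda_s=\pm i$, so I would not redo the lemma's computation. Instead I would rerun its proof with $G_3$ replaced by $H$. Both the standard form $\Omega_1$ and a nondegenerate $Mon(\mathcal{Q})$-invariant form $\Omega_2$ are $H$-invariant.
\item The pencil $\Omega_1+t\Omega_2$ then yields the same block decomposition as in the lemma. In the lemma's proof Zariski-density was used only for the group playing the role of $G_3$, together with $M_0$. Here Zariski-density of $H$ alone already contradicts the block form (\ref{eq:block-form}), unless the pencil is degenerate in the sense that $\Omega_2$ is proportional to $\Omega_1$.
\item In the proportional case, $\lambda_{s_0}M_{s_0}$ preserves $\Omega_1$ up to the factor $\lambda_{s_0}^2=-1$, contradicting $M_{s_0}\in\mathrm{Sp}$.
\end{enumerate}

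The main obstacle is step 4, namely justifying the pencil argument without the rationality assumptions used in Lemma \ref{lem:i}. The invariant forms can be taken rational, since the system is rational, but under a non-proportional $\Omega_2$ the radicals $R_{t_1}$ and $R_{t_2}$ of the degenerate members of the pencil must still be proper invariant subspaces. My first attempt is this: by Zariski-density of $H$, the space of $H$-invariant alternating forms is one-dimensional, since $\mathrm{Sp}(4,\mathbb{C})$ acts irreducibly on $\Lambda^2$ modulo its fixed line. That forces $\Omega_2=c\,\Omega_1$ directly, and step 5 then applies to give $\lambda_{s_0}^2=1$, contradicting $\lambda_{s_0}=\pm i$. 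Hence all $\lambda_s=\pm1$, and $f^2\in\mathbb{C}(t)$.
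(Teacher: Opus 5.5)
Your proposal is correct. For the first statement it matches the paper. The paper also reduces the shift by $f$ to M\"obius transformations and shifts at $0$, each of which multiplies the relevant generators by a scalar; this is your formula $Mon(\mathcal{Q})=\langle\lambda_sM_s\rangle$. It then uses the index count with $\langle-\operatorname{Id},M_s\rangle$ from Theorem~\ref{th:1/2}.

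For the second statement you take a different route. The paper invokes ``several applications of Theorem~\ref{th:1/2}'', so it ultimately rests on the determinant argument of Lemma~\ref{lem:1/4} and the rational pencil and block-form computation of Lemma~\ref{lem:i}. Your closing observation replaces all of this. Since $H$ is Zariski-dense, any $H$-invariant alternating form is $\mathrm{Sp}(4,\mathbb{C})$-invariant, hence a multiple of $\Omega_1$. Equivalently, $\Omega_1^{-1}\Omega_2$ commutes with an irreducible group and is scalar by Schur's lemma. Invariance of $\Omega_2=c\,\Omega_1$ under $\lambda_sM_s$ then gives $\lambda_s^2=1$ for every $s$ at once. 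Consequently your steps 1--4, including the ``main obstacle'' you flag, are superfluous, and you do not even need Lemma~\ref{lem:1/4}.

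Your version is shorter, coordinate-free and needs no rationality. It also handles all branch points at once. The paper's route passes through intermediate single-point shifts, which need not be symplectic, so Theorem~\ref{th:1/2} cannot be iterated verbatim. What the paper's heavier machinery buys is a weaker density hypothesis. In the setting of Theorem~\ref{th:1/2}, Lemma~\ref{lem:i} needs only Zariski-density of $Mon(\mathcal{P})$ with rational generators, not density of the intersection. Your shortcut genuinely relies on density of $Mon(\mathcal{P})\cap Mon(\mathcal{Q})$, which is exactly what the corollary assumes.
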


\section{Applications: double octic operators}\label{sec:app}

In this section we present two further applications of Theorem \ref{th:1/2}. First, we give an example of a Fuchsian operator which has local properties of a Picard-Fuchs operator near every point but is not a Picard-Fuchs operator of a one-parameter family. The second example is a geometric operator with thin monodromy group satisfying neither Assumption A, nor Assumption B.

The examples are constructed as shifts of Picard-Fuchs operators of one-parameter families of double octics. Among Calabi-Yau threefolds, double octics constitute a convenient source of examples. Their geometry is particularly simple and allows for explicit computations. On the other hand, this class is rich enough to reveal many unexpected phenomena (see \cite{non-liftable, orphans, CvS, orphan2, NOS}).

\subsection{Double octic Calabi-Yau threefolds}\label{ssec:double-octics}

Consider a double cover $X\xrightarrow{2:1}\mathbb{P}^3$ branched along an octic surface $O\subset\mathbb{P}^3$. If $O$ is smooth, $X$ is a Calabi-Yau threefold. We focus on the case when $O=P_1\cup\cdots\cup P_8\subset\mathbb{P}^3$ is a union of eight planes. The corresponding double cover $X$ is singular. Assume that the planes $P_1,\cdots,P_8$ are distinct and their union contains no $k$-fold lines for $k \geq 4$ and no $l$-fold points for $l \geq 6$. In this case singularities of $X$ can be resolved in such a way that the obtained resolution $\widetilde{X}$ is again a Calabi-Yau threefold (see \cite{double-covers}). We call $\widetilde{X}$ a \emph{double octic} defined by the \emph{octic arrangement} $O$.

Assume that the branching locus $O\subset\mathbb{P}^3$ is defined by an equation $P(x,y,z,w)=0$. If $O$ is an octic arrangement, $P$ is a product of eight linear factors. The double cover $X$ can be considered as a subvariety in the weighted projective space $\mathbb{P}(4,1,1,1,1)$ defined by the equation $u^2=P(x,y,z,w)$. The holomorphic $3$-form is given by $$\omega=\frac{dx\wedge dy\wedge dz}{u}=\frac{dx\wedge dy\wedge dz}{\sqrt{P(x,y,z,w)}}$$

Deformation of a double octic is a double octic, hence any one-parameter family of double octics $\widetilde{X}_t$ comes from a family of octic arrangements $O_t$ (see \cite{Cy-Ko}). If $P_t(x,y,z,w)=0$ is the equation defining $O_t$, the corresponding double cover $X_t$ is defined by $u^2=P_t(x,y,z,w)$. For a fixed family $\gamma_t\in H_3(\widetilde{X}_t,\mathbb{C})$ the period function is given by
\begin{equation}\label{period-function}
y(t)=\displaystyle\int_{\gamma_t}\frac{dx\wedge dy\wedge dz}{\sqrt{P_t(x,y,z,w)}}
\end{equation}
Picard-Fuchs operators of double octic families can be efficiently computed using the computer program described in \cite{lairez}.

\subsection{Locally geometric operators}\label{ssec:locall-not-globally}

We call a Fuchsian operator \emph{locally geometric} if local monodromy at every point is quasi-unipotent and symplectic. It follows from the Local Monodromy Theorem and the existence of cup product that geometric operators are locally geometric. Using \mbox{Theorem \ref{th:1/2}} we show that the converse does not hold.

For any geometric operator $\mathcal{P}$ and any $q\in\mathbb{Q}$ local monodromies of the shift $\mathcal{P}(q)$ are still quasi-unipotent. They are also symplectic at all singular points, except possibly at $0$ and $\infty$. If we find a \mbox{geometric operator $\mathcal{P}$} and $q\in\mathbb{Q}\setminus\frac{1}{2}\mathbb{Z}$ such that the local monodromies of $\mathcal{P}(q)$ at $0$ and $\infty$ are symplectic, the operator $\mathcal{P}(q)$ will be locally geometric. However, by Theorem \ref{th:1/2} we expect that in general it will not be geometric.

We begin by identifying conditions necessary for the existence of a shift with symplectic monodromy at the origin. Let $M_0$ be a matrix of the local monodromy at $0$ of some geometric operator $\mathcal{P}$. Since $M_0$ is symplectic, its possible Jordan forms are
$$
\begin{pmatrix}
	\zeta_1 & 1 & 0 & 0\\
	0 & \zeta_1 & 1 & 0\\
	0 & 0 & \zeta_1 & 1\\
	0 & 0 & 0 & \zeta_1
\end{pmatrix},\quad
\begin{pmatrix}
	\zeta_1 & 1 & 0 & 0\\
	0 & \zeta_1 & 0 & 0\\
	0 & 0 & \zeta_2 & 1\\
	0 & 0 & 0 & \zeta_2
\end{pmatrix},\quad
\begin{pmatrix}
	\zeta_1 & 1 & 0 & 0\\
	0 & \zeta_1 & 0 & 0\\
	0 & 0 & \zeta_2 & 0\\
	0 & 0 & 0 & \zeta_3
\end{pmatrix}\quad\textnormal{and}\quad
\begin{pmatrix}
	\zeta_1 & 0 & 0 & 0\\
	0 & \zeta_2 & 0 & 0\\
	0 & 0 & \zeta_3 & 0\\
	0 & 0 & 0 & \zeta_4
\end{pmatrix}
$$
for some $\zeta_j\in\mathbb{C}$, $j\in J\subset\{1,\cdots,4\}$. By the Local Monodromy Theorem eigenvalues $\zeta_j$ are roots of unity. For every $j\in J$ denote by $n_j\in\{1,2,4\}$ the size of the Jordan block corresponding to $\zeta_j$. Let $E$, resp. $E'$, be the set of pairs $\left\{(\zeta_j,n_j)\right\}_{j\in J}$, resp. $\left\{(i\zeta_j,n_j)\right\}_{j\in J}$. Then $E,E'\subset\mathbb{C}\times\mathbb{N}$ and the latter carries a natural action of the Galois group $\operatorname{Gal}(\mathbb{C}/\mathbb{Q})$, trivial on the second factor.

Assume that the local monodromy \mbox{at $0$} of $\mathcal{P}(q)$ is also symplectic. Then both $M_0$ and $e^{2\pi i q} M_0$ are conjugate to an element of $\mathrm{Sp}(4,\mathbb{Z})$. By Lemma \ref{lem:1/4} we have $4q\in\mathbb{Z}$. Using complex conjugation and an integral shift, we can arrange so that $q=\frac{1}{4}$. Thus $M_0$ and $i\cdot M_0$ are both conjugate to an integral matrix. It follows that the sets $E$ and $E'$ must be $\operatorname{Gal}(\mathbb{C}/\mathbb{Q})$-invariant.

A short case-by-case analysis shows that possible Jordan forms of $M_0$ are
$$
\begin{pmatrix}
	1 & 1 & 0 & 0\\
	0 & 1 & 0 & 0\\
	0 & 0 & -1 & 1\\
	0 & 0 & 0 & -1
\end{pmatrix}\textnormal{and}
\begin{pmatrix}
	i & 1 & 0 & 0\\
	0 & i & 0 & 0\\
	0 & 0 & -i & 1\\
	0 & 0 & 0 & -i
\end{pmatrix}\quad\quad\quad\quad
\begin{pmatrix}
	1 & 0 & 0 & 0\\
	0 & 1 & 0 & 0\\
	0 & 0 & -1 & 0\\
	0 & 0 & 0 & -1
\end{pmatrix}\textnormal{and}
\begin{pmatrix}
	i & 0 & 0 & 0\\
	0 & i & 0 & 0\\
	0 & 0 & -i & 0\\
	0 & 0 & 0 & -i
\end{pmatrix}
$$

$$
\begin{pmatrix}
	\frac{-1+\sqrt{3}i}{2} & 0 & 0 & 0\\
	0 & \frac{-1-\sqrt{3}i}{2} & 0 & 0\\
	0 & 0 & \frac{1+\sqrt{3}i}{2} & 0\\
	0 & 0 & 0 & \frac{1-\sqrt{3}i}{2}
\end{pmatrix}\textnormal{and}
\begin{pmatrix}
	\frac{\sqrt{3}+i}{2} & 0 & 0 & 0\\
	0 & \frac{-\sqrt{3}+i}{2} & 0 & 0\\
	0 & 0 & \frac{\sqrt{3}-i}{2} & 0\\
	0 & 0 & 0 & \frac{-\sqrt{3}-i}{2}
\end{pmatrix}$$

$$
\begin{pmatrix}
	\frac{\sqrt{2}+\sqrt{2}i}{2} & 0 & 0 & 0\\
	0 & \frac{-\sqrt{2}+\sqrt{2}i}{2} & 0 & 0\\
	0 & 0 & \frac{\sqrt{2}-\sqrt{2}i}{2} & 0\\
	0 & 0 & 0 & \frac{-\sqrt{2}-\sqrt{2}i}{2}
\end{pmatrix}
$$
where we have grouped the Jordan forms of $M_0$ and $i\cdot M_0$ together. Thus for any geometric operator $\mathcal{P}$ whose local monodromies at $0$ and $\infty$ have Jordan forms from the above list, the shift $\mathcal{P}(\frac{1}{4})$ is locally geometric.

To find such an operator, we start with a family of octic arrangements defined by
\begin{align*}
xyzw(x + y) (y + z) (x + y + z-w) (x + ty + z-w)=0
\end{align*}
This is the Arrangement 5 from \cite{Cy-Ko}. This family admits a symmetry of order 2 and taking the quotient by this symmetry yields a new family of octic arrangements. The Picard-Fuchs operator of the original family is a pull-back of the Picard-Fuchs operator of the quotient family. The latter has one singularity with appropriate Jordan form. To get a family with two such singularities, located precisely at $0$ and $\infty$, we make the substitution $t\mapsto\frac{(t+1)^2}{4t}$.

Explicitly, $X_t\xrightarrow{2:1}\mathbb{P}^3$ is the double cover branched along the octic arrangement
\begin{align*}
yw \left(x -y \right) \left(\frac{\left(t +1\right)^{2} }{4 t}x^{2}-w^{2}-2 z w -z^{2}\right) \left(\frac{\left(t +1\right)^{2} }{4 t}x^{2}-w^{2}+2 z w -z^{2}\right) \left(\frac{\left(t +1\right)^{2} }{4 t}x-\frac{\left(t +1\right)^{2} }{4 t}y+w \right)
\end{align*}
Resolutions of singularities $\widetilde{X}_t$ form a one-parameter family of Calabi-Yau threefolds over $\mathbb{P}^1\setminus\{0,1,-1,\infty\}$. The Picard-Fuchs operator of this family is
\begin{align*}
\mathcal{P}_{loc}&=256 \left(t -1\right)^{3} \left(t +1\right)^{3} \Theta^{4}+512 \left(t^{2}-4 t +1\right) \left(t -1\right)^{2} \left(t +1\right)^{2} \Theta^{3}\\
&+32 \left(t -1\right) \left(t +1\right) \left(11 t^{4}-32 t^{3}+138 t^{2}-32 t +11\right) \Theta^{2}\\
&+32\left(3 t^{6}-12 t^{5}-35 t^{4}-104 t^{3}-35 t^{2}-12 t +3\right) \Theta +9 \left(t +1\right) \left(t -1\right)^{5}
\end{align*}
with Riemann symbol (showing singularities and their corresponding local exponents):
\[\left\{\begin{tabular}{*{4}c}
	0& $1$& --1&$\infty$\\ 
	\hline
	1/4& 0& 0& --1/4\\
	1/4& 2& 0& --1/4\\
	3/4& 2& 0& --3/4\\
	3/4& 4& 0& --3/4\\
\end{tabular}\right\}\]
Using Frobenius bases one finds that the Jordan form of both $M_0$ and $M_\infty$ is
$$
\begin{pmatrix}
	i & 1 & 0 & 0\\
	0 & i & 0 & 0\\
	0 & 0 & -i & 1\\
	0 & 0 & 0 & -i
\end{pmatrix}
$$
Furthermore, all local exponents at $-1$ are equal, so it is a MUM point. From Corollary \ref{cor:mum} we obtain:
\begin{theorem}
Let $k\in\mathbb{Z}\setminus 2\mathbb{Z}$. The operator $\mathcal{P}_{loc}(\frac{k}{4})$ is locally geometric but not geometric.
\end{theorem}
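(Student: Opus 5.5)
The proof has two parts: local geometricity, which is a check point by point, and non-geometricity, which follows from Corollary \ref{cor:mum}.

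\emph{Non-geometricity.} The operator $\mathcal{P}_{loc}$ is geometric, being the Picard-Fuchs operator of the family $\widetilde{X}_t$. Its local exponents at $-1$ are all equal to $0$, so $-1$ is a MUM point. Since $k$ is odd, $\alpha=\frac{k}{4}\notin\frac12\mathbb{Z}$. Corollary \ref{cor:mum} then says directly that $\mathcal{P}_{loc}(\frac{k}{4})$ is not geometric. The one thing to confirm is that the MUM point survives the shift. By Lemma \ref{lem:shift} and the argument in the proof of Theorem \ref{th:1/2}, the local monodromy at $-1$ is unchanged, because $t^{\alpha}$ is holomorphic there. The corollary is stated for $\mathcal{P}$ itself, so no further check is needed.

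\emph{Local geometricity: points other than $0$ and $\infty$.} By the proof of Theorem \ref{th:1/2}, in the basis $t^{\alpha}y_1,\dots,t^{\alpha}y_4$ the local monodromies of the shift at $1$ and $-1$ (and trivially at nonsingular points) are the same matrices $M_1, M_{-1}$ as for $\mathcal{P}_{loc}$. These are quasi-unipotent by the Local Monodromy Theorem and symplectic since they lie in $\mathrm{Sp}(4,\mathbb{Z})$.

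\emph{Local geometricity at $0$ and $\infty$.} Here the local monodromies become $e^{2\pi i k/4}M_0=\pm iM_0$ and $e^{-2\pi i k/4}M_\infty=\mp iM_\infty$. One can see this from the loop around $\infty$, whose class is the inverse of the product of the others, so the scalar factor is inverted. The Jordan form of $M_0$ and of $M_\infty$ has blocks $J_2(i)$ and $J_2(-i)$, so multiplying by $\pm i$ gives a matrix with Jordan blocks $J_2(1)\oplus J_2(-1)$ (up to reordering). This is the paired form listed above. It is quasi-unipotent: its fourth power is unipotent, and indeed its square is.

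It remains to show this matrix is symplectic, i.e. conjugate to an element of $\mathrm{Sp}(4,\mathbb{Z})$. I would exhibit an explicit integral symplectic matrix with Jordan form $J_2(1)\oplus J_2(-1)$. Take the block-diagonal matrix $\begin{pmatrix}1&1\\0&1\end{pmatrix}\oplus\begin{pmatrix}-1&1\\0&-1\end{pmatrix}$ with respect to the standard form $\Omega_1$. Each $2\times2$ block has determinant $1$, so it lies in $\mathrm{SL}(2,\mathbb{Z})=\mathrm{Sp}(2,\mathbb{Z})$, and the direct sum is therefore in $\mathrm{Sp}(4,\mathbb{Z})$. Its Jordan form is exactly $J_2(1)\oplus J_2(-1)$. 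Matrices with the same Jordan form are conjugate in $\mathrm{GL}(4,\mathbb{C})$, which is the definition of symplectic used in the paper.

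The step needing the most care is the Jordan form computation at $0$ and $\infty$, which comes from the Frobenius bases. I take it as given, as stated just before the theorem. The remaining verifications are routine.
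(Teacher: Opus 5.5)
Your proposal is correct and follows the paper's argument. The local monodromies away from $0$ and $\infty$ are unchanged, and at $0$ and $\infty$ the Jordan form $J_2(i)\oplus J_2(-i)$ is multiplied by $\pm i$, giving $J_2(1)\oplus J_2(-1)$. Non-geometricity follows from Corollary~\ref{cor:mum} via the MUM point $-1$. Your explicit block-diagonal element $J_2(1)\oplus J_2(-1)\in\mathrm{Sp}(4,\mathbb{Z})$ (with respect to $\Omega_1$) also makes explicit a realizability fact that the paper leaves to its ``short case-by-case analysis'' list.
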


\subsection{Quadratic twists and thin monodromy}\label{ssec:A/B}

Recall that if a geometric operator satisfies Assumption A or B from \cite{simion}, its monodromy group is of infinite index (see Section \ref{ssec:PFsymplectic}). In this section we explain how one can construct plethora of examples of thin monodromy groups satisfying neither Assumption A, nor \mbox{Assumption B.} The construction utilizes shifts of local exponents induced by quadratic twists of one-parameter families of double octics.

A family of octic arrangements $O_t$ with defining polynomials $P_t(x,y,z,w)$ does not uniquely determine the family of double covers $X_t\xrightarrow{2:1}\mathbb{P}^3$ branched along $O_t$. Indeed, for any rational function $\phi(t)$ we can form the \emph{twisted family} $X^{\phi(t)}_t$ defined by $u^2=\phi(t)^{-1}\cdot P_t(x,y,z,w)$. Note that for generic $t\in\mathbb{P}^1$ the fibers $X_t$ and $X^{\phi(t)}_t$ are isomorphic. Nevertheless, the families themselves need not be, e.g. the twisted family can have more singularities.

From the explicit formula (\ref{period-function}) for the period function $y(t)$ of a double octic family $\widetilde{X}_t$, we immediately see that the period function of the twisted family $\widetilde{X}^{\phi(t)}_t$ is given by $\sqrt{\phi(t)}y(t)$. Consider the case $\phi(t)=t^k$ for some $k\in\mathbb{Z}$. Let $\mathcal{P}$ be the Picard-Fuchs operator of the original family. Since $\mathcal{P}y=0$ implies $\mathcal{P}(\tfrac{k}{2})\sqrt{t^k}y=0$, the Picard-Fuchs operator of the twisted family is $\mathcal{P}(\tfrac{k}{2})$. In particular, half-integral shifts of double octic operators are geometric. 

Assume that $\mathcal{P}$ is a double octic operator satisfying Assumption A or Assumption B. The monodromy group $Mon(\mathcal{P})$ is thin by Lemma \ref{lem:AB}, so the monodromy group $Mon\left(\mathcal{P}(\frac{k}{2})\right)$ is also thin by Theorem \ref{th:1/2}. Assume that $0$ is a non-singular point of $\mathcal{P}$, which can always be arranged using a change of variables. Local exponents at a non-singular point are $0,1,2,3$. Thus local exponents at $0$ of the shifted operator $\mathcal{P}(\frac{k}{2})$ are $\frac{k}{2},\frac{k+2}{2},\frac{k+4}{2},\frac{k+6}{2}$. If $k\not\in 2\mathbb{Z}$, after the shift the order of the local monodromy at $0$ is $N=2$. By Lemma \ref{lem:AB} neither Assumption A, nor Assumption B is satisfied at $0$.
\begin{theorem}\label{cor:A/B}
Let $\mathcal{P}$ be a double octic operator satisfying Assumption A or Assumption B. Assume that $0$ is a non-singular point and let $k\in\mathbb{Z}\setminus 2\mathbb{Z}$. Then the operator $\mathcal{P}(\frac{k}{2})$ is geometric with thin monodromy group but satisfies neither Assumption A, nor Assumption B.
\end{theorem}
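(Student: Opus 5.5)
The plan has three parts, matching the three assertions of the statement: the shifted operator is geometric, its monodromy is thin, and Assumptions A and B both fail (at $0$).

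\textbf{Geometricity.} Realize $\mathcal{P}(\frac{k}{2})$ as the Picard-Fuchs operator of the quadratic twist $\widetilde{X}^{t^k}_t$. By (\ref{period-function}), the period function of the twisted family is $\sqrt{t^k}\,y(t)=t^{k/2}y(t)$. By Lemma \ref{lem:shift}, this function is annihilated by $\mathcal{P}(\frac{k}{2})$. Since the shift only conjugates the operator by multiplication by $t^{k/2}$, the order is still $4$, and $\mathcal{P}(\frac{k}{2})$ is the minimal operator of the twisted period. The twisted fibres are isomorphic to the original ones for $t\neq0,\infty$, so they are still Calabi-Yau threefolds with $h^{2,1}=1$. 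Hence the twisted family is a one-parameter family over $\mathbb{P}^1$ minus finitely many points, possibly now including $0$ and $\infty$, and $\mathcal{P}(\frac{k}{2})$ is geometric.

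\textbf{Thinness.} By Lemma \ref{lem:AB}, $Mon(\mathcal{P})$ has infinite index in $\mathrm{Sp}(4,\mathbb{Z})$. To call it thin we need Zariski-density. For a double octic operator satisfying Assumption A or B, I would take this from the cited framework (the log-Anosov setting of \cite{simion}), or observe that symplecticity is implicitly assumed when speaking of thin monodromy. Since $2\cdot\frac{k}{2}\in\mathbb{Z}$, Theorem \ref{th:1/2} gives two things. First, $\mathcal{P}(\frac{k}{2})$ is again symplectic, so its monodromy is Zariski-dense. Second, its monodromy is arithmetic if and only if $Mon(\mathcal{P})$ is. Therefore $Mon(\mathcal{P}(\frac{k}{2}))$ is thin.

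\textbf{Failure of A and B.} It suffices to exhibit one point where each assumption fails, and $0$ serves for both. Since $0$ is non-singular for $\mathcal{P}$, its local exponents there are $0,1,2,3$. By Lemma \ref{lem:shift}, the exponents of $\mathcal{P}(\frac{k}{2})$ at $0$ are $\frac{k}{2},\frac{k}{2}+1,\frac{k}{2}+2,\frac{k}{2}+3$. These are four distinct exponents, so $k=4$ in the notation of Lemma \ref{lem:AB}.

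By the proof of Theorem \ref{th:1/2}, the local monodromy at $0$ is $e^{\pi i k}M_0=-\operatorname{Id}$, since $M_0=\operatorname{Id}$ at a non-singular point. Its order is therefore $N=2$. Consecutive exponent differences are all $1$, so $N(\alpha_2-\alpha_1)=N(\alpha_3-\alpha_2)=2\neq1$. By Lemma \ref{lem:AB}, neither assumption holds at $0$.

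\textbf{Expected obstacle.} The main subtlety is the geometricity step. One must check that the twist really gives a family of Calabi-Yau threefolds whose resolution still exists fibrewise away from finitely many points. This holds because the octic arrangement itself is unchanged for $t\neq0,\infty$. One must also check that $\mathcal{P}(\frac{k}{2})$ is exactly the Picard-Fuchs operator rather than merely an annihilator of the twisted period, which follows from the order count above.
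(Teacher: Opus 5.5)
Your proposal is correct and follows the paper's argument essentially step for step. Geometricity comes from the quadratic twist $u^2=t^{-k}P_t$, whose period is $t^{k/2}y(t)$; thinness comes from Lemma~\ref{lem:AB} together with the arithmeticity clause of Theorem~\ref{th:1/2}; and both assumptions fail at $0$, because the shifted exponents $\frac{k}{2},\frac{k}{2}+1,\frac{k}{2}+2,\frac{k}{2}+3$ give local monodromy $-\operatorname{Id}$ of order $N=2$. Your extra remarks on the minimality of the shifted operator and on the implicitly assumed Zariski-density address points the paper also leaves tacit; they are not gaps in your argument.
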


We end this paper by listing several examples of double octic operators satisfying Assumption A or Assumption B. The list contains the octic arrangement defining the family, the corresponding Picard-Fuchs operator and its Riemann symbol. The numbering follows that of \cite{Cy-Ko}. We draw special attention to operators satisfying Assumption B, since no example of such operator appears in \cite{simion}.

\begin{footnotesize}
\begin{center}
	\Large{Double octic operators satisfying Assumption A}
\end{center}

\textbf{Operator No. 2}

\( xyzw \left( z+w \right)  \left( z+y
\right)  \left( y+x \right)  \left( x+wt \right) \) 

\({\Theta}^{4}
-t \left(\Theta+\tfrac12 \right) ^{4}\)

\[\left\{\begin{tabular}{*{3}c}
	0& $1$& $\infty$\\ 
	\hline
	0&  0& 1/2\\
	0&  1& 1/2\\
	0&  1& 1/2\\
	0&  2& 1/2\\
\end{tabular}\right\}\]

\textbf{Operator No. 10}

\(xyzw \left( z-w \right)  \left( z+y \right)  \left( y+x \right)  \left( -x+yt+zt+w \right) \)

\(\Theta\, \left( \Theta-1 \right)  \left(\Theta-\frac12 \right) ^{2}
+\frac12\,t{\Theta}^{2} \left( 4\,{\Theta}^{2}+1 \right) 
+\frac1{16}\,{t}^{2} \left( 2\,\Theta+1 \right) ^{4}
\)

\[\left\{\begin{tabular}{*{4}c}
	0& --1&$\infty$\\ 
	\hline
	0& 0& 1/2\\
	1/2& 1/2& 1/2\\
	1/2& 1/2& 1/2\\
	1& 1& 1/2\\
\end{tabular}\right\}\]

\textbf{Operator No. 16}

\(xyzw \left( z+y \right)  \left( y+x \right)  \left( -y+w+zt \right)
\left( xt-y+w \right) 
\)

\({\Theta}^{4}
+\frac14\,t \left( 2\,{\Theta}^{2}+2\,\Theta+1 \right)  \left( 2\,\Theta+1 \right) ^{2}
+\frac14\,{t}^{2} \left( 2\,\Theta+3 \right)  \left( 2\,\Theta+1 \right)  \left( \Theta+1 \right) ^{2}
\)

\[\left\{\begin{tabular}{*{4}c}
	0& --1&$\infty$\\ 
	\hline
	0& 0& 1/2\\
	0& 1/2& 1\\
	0& 1/2& 1\\
	0& 1& 3/2\\
\end{tabular}\right\}\]

\textbf{Operator No. 242}

\(xyzw \left( x+z-w \right)  \left( x+z+y \right)  \left( x+xt+ty+y+tw \right)  \left( x+tw+ty+y-zt \right) \)

\({\Theta}^{4}
+t \left( 5\,{\Theta}^{4}+10\,{\Theta}^{3}+10\,{\Theta}^{2}+5\,\Theta+1 \right)
+{t}^{2} \left( 9\,{\Theta}^{2}+18\,\Theta+14 \right)  \left( \Theta+1
\right) ^{2}\\
+{t}^{3} \left( \Theta+2 \right)  \left( \Theta+1 \right)  \left( 7\,{\Theta}^{2}+21\,\Theta+18 \right)
+2\,{t}^{4} \left( \Theta+3 \right)  \left( \Theta+1 \right)  \left( \Theta+2 \right) ^{2}
\)

\[\left\{\begin{tabular}{*{4}c}
	0& --1/2& --1&$\infty$\\ 
	\hline
	0& 0& 0& 1\\
	0& 1& 0& 2\\
	0& 1& 0& 2\\
	0& 2& 0& 3\\
\end{tabular}\right\}\]

\textbf{Operator No. 246}

\(xyzw \left( x+z+y \right)  \left( w+zt-x \right)  \left( x-w+ty+y+z \right)  \left( x+tw+ty+y-zt \right) 
\)

\(\Theta\, \left( \Theta-1 \right)  \left(\Theta -\frac12\right) ^{2}
+\frac14\,t{\Theta}^{2} \left( 20\,{\Theta}^{2}+11 \right) 
+{t}^{2} \left( 9\,{\Theta}^{4}+18\,{\Theta}^{3}+22\,{\Theta}^{2}+13\,\Theta+{\frac {51}{16}} \right)\\ 
+\frac1{16}\,{t}^{3} \left( 4\,{\Theta}^{2}+8\,\Theta+7 \right)  \left( 28\,{\Theta}^{2}+56\,\Theta+29 \right) 
+\frac18\,{t}^{4} \left( 2\,\Theta+3 \right) ^{4}
\)

\[\left\{\begin{tabular}{*{4}c}
	0& --1/2& --1&$\infty$\\ 
	\hline
	0& 0& --1/2& 3/2\\
	1/2& 1& 0& 3/2\\
	1/2& 1& 0& 3/2\\
	1& 2& 1/2& 3/2\\
\end{tabular}\right\}\]

\medskip

\begin{center}
	\Large{Double octic operators satisfying Assumption B}
\end{center}

\textbf{Operator No. 21}

\(xyzw \left( z+y \right)  \left( y+x \right)  \left( x+zt-w \right)  \left( yt+wt-x+w \right) 
\)

\({\Theta}^{4}
+t \left( 7\,{\Theta}^{4}+2\,{\Theta}^{3}+3\,{\Theta}^{2}+2\,\Theta+{\frac {7}{16}} \right)
+{t}^{2} \left( 19\,{\Theta}^{4}+16\,{\Theta}^{3}+21\,{\Theta}^{2}+10\,\Theta+\frac{7}{4} \right)\\
+{t}^{3} \left( 25\,{\Theta}^{4}+42\,{\Theta}^{3}+50\,{\Theta}^{2}+27\,\Theta+{\frac {23}{4}} \right)
+{t}^{4} \left( 16\,{\Theta}^{4}+44\,{\Theta}^{3}+56\,{\Theta}^{2}+34\,\Theta+8 \right) 
+4\,{t}^{5} \left( \Theta+1 \right) ^{4}
\)

\[\left\{\begin{tabular}{*{4}c}
	0& --1/2& --1& $\infty$\\ 
	\hline
	0&  0& 0&1\\
	0&  1& 0&1\\
	0&  3& 0&1\\
	0&  4& 0&1\\
\end{tabular}\right\}\]

\textbf{Operator No. 53}

\(xyzw \left( w+z \right)  \left( y+x \right)  \left( xt+w+yt-zt \right)  \left( -x+w+yt+z \right) 
\)

\({\Theta}^{4}
+t \left( {\Theta}^{4}+8\,{\Theta}^{3}+7\,{\Theta}^{2}+3\,\Theta+{\frac {9}{16}} \right) 
+{t}^{2} \left( -2\,{\Theta}^{4}+4\,{\Theta}^{3}+19\,{\Theta}^{2}+15\,\Theta+{\frac {71}{16}} \right)\\
+{t}^{3} \left( -2\,{\Theta}^{4}-12\,{\Theta}^{3}-5\,{\Theta}^{2}+3\,\Theta+{\frac {39}{16}} \right)
+{t}^{4} \left( {\Theta}^{4}-4\,{\Theta}^{3}-11\,{\Theta}^{2}-9\,\Theta-{\frac {39}{16}} \right) 
+{t}^{5} \left( \Theta+1 \right) ^{4}
\)

\[\left\{\begin{tabular}{*{4}c}
	0& 1& --1& $\infty$\\ 
	\hline
	0&  0& 0&1\\
	0&  1& 0&1\\
	0&  3& 0&1\\
	0&  4& 0&1\\
\end{tabular}\right\}\]

\textbf{Operator No. 96}

\(xyzw \left( x+y \right)  \left( x+y-z+w \right)  \left( y+w+zt \right)  \left( -x-w+ty-zt \right) 
\)

\(
{\Theta}^{2} \left( \Theta-2 \right) ^{2}
+t \left( \frac92\,{\Theta}^{4}-11\,{\Theta}^{3}+7\,{\Theta}^{2}+\frac12\,\Theta+\frac12 \right) 
+{t}^{2} \left( {\frac {33}{4}}\,{\Theta}^{4}-8\,{\Theta}^{3}+{\frac {73}{16}}\,{\Theta}^{2}+\frac12\,\Theta+\frac34 \right) \\
+{t}^{3} \left( {\frac {63}{8}}\,{\Theta}^{4}+{\frac {13}{4}}\,{\Theta}^{3}+{\frac {189}{32}}\,{\Theta}^{2}+{\frac {71}{32}}\,\Theta+{\frac {25}{32}} \right) 
+{t}^{4} \left( {\frac {33}{8}}\,{\Theta}^{4}+7\,{\Theta}^{3}+{\frac {31}{4}}\,{\Theta}^{2}+{\frac {33}{8}}\,\Theta+{\frac {63}{64}} \right) \\
+{t}^{5} \left( {\frac {9}{8}}\,{\Theta}^{4}+{\frac {13}{4}}\,{\Theta}^{3}+{\frac {133}{32}}\,{\Theta}^{2}+{\frac {81}{32}}\,\Theta+{\frac {77}{128}} \right) 
+\frac18\,{t}^{6} \left( \Theta+1 \right) ^{4}
\)

\[\left\{\begin{tabular}{*{4}c}
	0& --1& --2& $\infty$\\ 
	\hline
	0&  0& 0&1\\
	0&  0& 1/2&1\\
	2&  0& 3/2&1\\
	2&  0& 2&1\\
\end{tabular}\right\}\]

\textbf{Operator No. 100}

\(xyzw \left( x+y-z+w \right)  \left( ty-w-zt \right)  \left( y+w+zt \right)  \left( zt+x+y \right) 
\)

\(
{\Theta}^{4}
+t \left( 9\,{\Theta}^{4}+10\,{\Theta}^{3}+{\frac {37}{4}}\,{\Theta}^{2}+{\frac {17}{4}}\,\Theta+{\frac {13}{16}} \right) 
+{t}^{2} \left( 33\,{\Theta}^{4}+76\,{\Theta}^{3}+92\,{\Theta}^{2}+55\,\Theta+{\frac {111}{8}} \right) \\
+{t}^{3} \left( 63\,{\Theta}^{4}+226\,{\Theta}^{3}+{\frac {1389}{4}}\,{\Theta}^{2}+{\frac {1003}{4}}\,\Theta+{\frac {291}{4}} \right) 
+{t}^{4} \left( 66\,{\Theta}^{4}+328\,{\Theta}^{3}+{\frac {1249}{2}}\,{\Theta}^{2}+525\,\Theta+{\frac {337}{2}} \right) \\
+{t}^{5} \left( 36\,{\Theta}^{4}+232\,{\Theta}^{3}+536\,{\Theta}^{2}+516\,\Theta+180 \right) 
+8\,{t}^{6} \left( \Theta+3 \right) ^{2} \left( \Theta+1 \right) ^{2}
\)

\[\left\{\begin{tabular}{*{4}c}
	0& --1/2& --1&$\infty$\\ 
	\hline
	0&  0& 0&1\\
	0&  1/2& 0&1\\
	0&  3/2& 0&3\\
	0&  2& 0&3\\
\end{tabular}\right\}\]

\textbf{Operator No. 155}

\(xyzw \left( x+y+tz+w \right)  \left( x+tz+y \right)  \left( tz+w+xt \right)  \left( yt-x-y-w-z \right) 
\)

\({\Theta}^{4}
+t \left( -6\,{\Theta}^{4}-6\,{\Theta}^{3}-\frac{13}2\,{\Theta}^{2}-\frac72\,\Theta-\frac34 \right) 
+{t}^{2} \left( 18\,{\Theta}^{4}+30\,{\Theta}^{3}+{\frac {167}{4}}\,{\Theta}^{2}+31\,\Theta+9 \right)\\ 
+{t}^{3} \left( -35\,{\Theta}^{4}-78\,{\Theta}^{3}-{\frac {243}{2}}\,{\Theta}^{2}-{\frac {411}{4}}\,\Theta-{\frac {279}{8}} \right) 
+{t}^{4} \left( 48\,{\Theta}^{4}+132\,{\Theta}^{3}+{\frac {887}{4}}\,{\Theta}^{2}+{\frac {761}{4}}\,\Theta+{\frac {1083}{16}} \right) \\
+{t}^{5} \left( -48\,{\Theta}^{4}-156\,{\Theta}^{3}-{\frac {1103}{4}}\,{\Theta}^{2}-232\,\Theta-{\frac {315}{4}} \right) 
+{t}^{6} \left( 35\,{\Theta}^{4}+132\,{\Theta}^{3}+243\,{\Theta}^{2}+{\frac {831}{4}}\,\Theta+{\frac {1089}{16}} \right) \\
+{t}^{7} \left( -18\,{\Theta}^{4}-78\,{\Theta}^{3}-{\frac {599}{4}}\,{\Theta}^{2}-{\frac {539}{4}}\,\Theta-{\frac {741}{16}} \right) 
+{t}^{8} \left( 6\,{\Theta}^{4}+30\,{\Theta}^{3}+{\frac {121}{2}}\,{\Theta}^{2}+{\frac {113}{2}}\,\Theta+{\frac {81}{4}} \right) 
-\frac1{16}\,{t}^{9} \left( 2\,\Theta+3 \right) ^{4}
\)

\[\left\{\begin{tabular}{*{4}c}
	0& 1& $\frac{1\pm\sqrt{-3}}{2}$& $\infty$\\ 
	\hline
	0&  0& 0&3/2\\
	0&  0& 1/2&3/2\\
	0&  0& 5/2&3/2\\
	0&  0& 3&3/2\\
\end{tabular}\right\}\]

\textbf{Operator No. 200}

\(yxzw \left( x+y+z+w \right)  \left( zt-w-y \right)  \left( -x+t+ty \right)  \left( zt+t-x-y \right) 
\)

\({\Theta}^{4}
+t \left( 6\,{\Theta}^{4}+6\,{\Theta}^{3}+\frac{13}2\,{\Theta}^{2}+\frac72\,\Theta+\frac34 \right) 
+{t}^{2} \left( 18\,{\Theta}^{4}+30\,{\Theta}^{3}+{\frac {167}{4}}\,{\Theta}^{2}+31\,\Theta+9 \right) \\
+{t}^{3} \left( 35\,{\Theta}^{4}+78\,{\Theta}^{3}+{\frac {243}{2}}\,{\Theta}^{2}+{\frac {411}{4}}\,\Theta+{\frac {279}{8}} \right) 
+{t}^{4} \left( 48\,{\Theta}^{4}+132\,{\Theta}^{3}+{\frac {887}{4}}\,{\Theta}^{2}+{\frac {761}{4}}\,\Theta+{\frac {1083}{16}} \right) \\
+{t}^{5} \left( 48\,{\Theta}^{4}+156\,{\Theta}^{3}+{\frac {1103}{4}}\,{\Theta}^{2}+232\,\Theta+{\frac {315}{4}} \right) 
+{t}^{6} \left( 35\,{\Theta}^{4}+132\,{\Theta}^{3}+243\,{\Theta}^{2}+{\frac {831}{4}}\,\Theta+{\frac {1089}{16}} \right) \\
+{t}^{7} \left( 18\,{\Theta}^{4}+78\,{\Theta}^{3}+{\frac {599}{4}}\,{\Theta}^{2}+{\frac {539}{4}}\,\Theta+{\frac {741}{16}} \right) 
+{t}^{8} \left( 6\,{\Theta}^{4}+30\,{\Theta}^{3}+{\frac {121}{2}}\,{\Theta}^{2}+{\frac {113}{2}}\,\Theta+{\frac {81}{4}} \right) 
+\frac1{16}\,{t}^{9} \left( 2\,\Theta+3 \right) ^{4}
\)

\[\left\{\begin{tabular}{*{4}c}
	0& --1& $\frac{-1\pm\sqrt{-3}}{2}$& $\infty$\\ 
	\hline
	0&  0& 0&3/2\\
	0&  0& 1/2&3/2\\
	0&  0& 5/2&3/2\\
	0&  0& 3&3/2\\
\end{tabular}\right\}\]

\textbf{Operator No. 267}

\(xyzw \left( -y+tw+ty-zt \right)  \left( x+y+zt-z \right)  \left( x+ty-z+w \right)  \left( xt+tw+ty-z \right) 
\)

\({\Theta}^{4}
+t \left( -8\,{\Theta}^{4}-7\,{\Theta}^{3}-8\,{\Theta}^{2}-\frac92\,\Theta-1 \right) 
+{t}^{2} \left( {\frac {105}{4}}\,{\Theta}^{4}+{\frac {129}{2}}\,{\Theta}^{3}+90\,{\Theta}^{2}+{\frac {249}{4}}\,\Theta+{\frac {303}{16}} \right) \\
+{t}^{3} \left( -{\frac {175}{4}}\,{\Theta}^{4}-{\frac {471}{2}}\,{\Theta}^{3}-{\frac {1711}{4}}\,{\Theta}^{2}-{\frac {1515}{4}}\,\Theta-{\frac {2283}{16}} \right) 
+{t}^{4} \left( {\frac {119}{4}}\,{\Theta}^{4}+{\frac {863}{2}}\,{\Theta}^{3}+{\frac {4473}{4}}\,{\Theta}^{2}+{\frac {4885}{4}}\,\Theta+{\frac {34559}{64}} \right) \\
+{t}^{5} \left( {\frac {105}{4}}\,{\Theta}^{4}-{\frac {717}{2}}\,{\Theta}^{3}-{\frac {3303}{2}}\,{\Theta}^{2}-{\frac {9057}{4}}\,\Theta-{\frac {18741}{16}} \right) 
+{t}^{6} \left( -{\frac {165}{2}}\,{\Theta}^{4}-99\,{\Theta}^{3}+{\frac {2313}{2}}\,{\Theta}^{2}+{\frac {4653}{2}}\,\Theta+{\frac {94485}{64}} \right) \\
+{t}^{7} \left( {\frac {165}{2}}\,{\Theta}^{4}+561\,{\Theta}^{3}+{\frac {459}{2}}\,{\Theta}^{2}-{\frac {1695}{2}}\,\Theta-{\frac {58965}{64}} \right) 
+{t}^{8} \left( -{\frac {105}{4}}\,{\Theta}^{4}-{\frac {1137}{2}}\,{\Theta}^{3}-{\frac {2259}{2}}\,{\Theta}^{2}-{\frac {3201}{4}}\,\Theta-{\frac {627}{16}} \right) \\
+{t}^{9} \left( -{\frac {119}{4}}\,{\Theta}^{4}+{\frac {387}{2}}\,{\Theta}^{3}+{\frac {3027}{4}}\,{\Theta}^{2}+{\frac {3897}{4}}\,\Theta+{\frac {25953}{64}} \right) 
+{t}^{10} \left( {\frac {175}{4}}\,{\Theta}^{4}+{\frac {229}{2}}\,{\Theta}^{3}+{\frac {259}{4}}\,{\Theta}^{2}-{\frac {375}{4}}\,\Theta-{\frac {1405}{16}} \right) \\
+{t}^{11} \left( -{\frac {105}{4}}\,{\Theta}^{4}-{\frac {291}{2}}\,{\Theta}^{3}-333\,{\Theta}^{2}-{\frac {1455}{4}}\,\Theta-{\frac {2535}{16}} \right) 
+{t}^{12} \left( 8\,{\Theta}^{4}+57\,{\Theta}^{3}+158\,{\Theta}^{2}+{\frac {399}{2}}\,\Theta+96 \right) 
-{t}^{13} \left( \Theta+2 \right) ^{4}
\)

\[\left\{\begin{tabular}{*{7}c}
	0& 1/2& 1& 2& --1& $\frac{1\pm\sqrt{-3}}{2}$& $\infty$\\ 
	\hline
	0&  0& 0& 0& 0& 0& 2 \\
	0&  1& 0& 1& 1& 0& 2 \\
	0&  3& 0& 3& 3& 1& 2 \\
	0&  4& 0& 4& 4& 1& 2 \\
\end{tabular}\right\}\]

\textbf{Operator No. 275}

\(xyz \left( 4\,z+w-4\,zt-tw+8\,ty \right)  \left( 2\,z+2\,zt+t+2\,xt
\right)  \left( 4\,y-w+4\,ty-t-2\,x+2\,xt \right)  \left( xt+ty-zt+y+z
\right)  
\)

\({\Theta}^{4}
+{t}^{2} \left( -15\,{\Theta}^{4}+48\,{\Theta}^{3}+65\,{\Theta}^{2}+34\,\Theta+{\frac {39}{4}} \right) 
+{t}^{4} \left( 21\,{\Theta}^{4}-480\,{\Theta}^{3}+78\,{\Theta}^{2}-114\,\Theta-123 \right) \\
+{t}^{6} \left( 341\,{\Theta}^{4}+528\,{\Theta}^{3}+1627\,{\Theta}^{2}+2364\,\Theta+{\frac {2475}{2}} \right) 
+{t}^{8} \left( -285\,{\Theta}^{4}+192\,{\Theta}^{3}+948\,{\Theta}^{2}+1980\,\Theta+1620 \right) \\
+{t}^{10} \left( -1197\,{\Theta}^{4}-11952\,{\Theta}^{3}-45081\,{\Theta}^{2}-77094\,\Theta-{\frac {203013}{4}} \right) 
+81\,{t}^{12} \left( \Theta+3 \right)  \left( 23\,{\Theta}^{3}+219\,{\Theta}^{2}+685\,\Theta+687 \right) \\
-729\,{t}^{14} \left( \Theta+5 \right)  \left( \Theta+3 \right)  \left( \Theta+2 \right)  \left( \Theta+6 \right) 
\)

\[\left\{\begin{tabular}{*{7}c}
	0& 1/3& 1& --1/3& --1& $\pm\frac{\sqrt{-3}}{3}$& $\infty$\\ 
	\hline
	0&  0& 0& 0& 0& 0& 2 \\
	0&  1& 0& 1& 0& 0& 3 \\
	0&  3& 0& 3& 0& 1& 5 \\
	0&  4& 0& 4& 0& 1& 6 \\
\end{tabular}\right\}\]

\textbf{Operator No. 276}

\(xyzw   \left( -x+x{t}-{t}y-z \right)  \left( -y-z-w+{t} \right)  \left( x+z+w \right)  \left( x+{t}y+{t} \right)\)

\(
{\Theta}^{4}
+t\left(  -13\,{\Theta}^{4}+10\,{\Theta}^{3}+11/2\,{\Theta}^{2}+1/2\,\Theta-1/16
\right) 
+t^{2}\left(
48\,{\Theta}^{4}-72\,{\Theta}^{3}+9\,{\Theta}^{2}-9\,\Theta-6
\right) \\
+ t^{3}\left( -24\,{\Theta}^{4
}+120\,{\Theta}^{3}+117\,{\Theta}^{2}+114\,\Theta+{\frac {145}{4}} \right)+ 
t^{4}\left( -86\,{\Theta}^{4}-280\,{\Theta}^{3}-459\,{\Theta}^{2}-379\,\Theta-120
\right) \\
+ t^{5}\left( 86\,{\Theta}^{4}+236\,{\Theta}^{3}+360\,{\Theta}^{2}+269\,\Theta
+{\frac {597}{8}} \right) + 
t^{6}\left( 24\,{\Theta}^{4}+264\,{\Theta}^{3}+747\,{\Theta}^{2}+897\,\Theta+398 \right)  \\
+ t^{7}\left(
-48\,{\Theta}^{4}-360\,{\Theta}^{3}-981\,{\Theta}^{2}-1170\,\Theta-{\frac
	{2055}{4}} \right) 
+ t^{8}\left( 13\,{\Theta}^{4}+88\,{\Theta}^{3
}+215\,{\Theta}^{2}+227\,\Theta+88 \right)  \\
+t^{9}\left( -{\Theta}^{4}-6\,{\Theta}^{3}-{\frac {27}{2}}\,{\Theta}^{2}-{\frac {27}{2}}\,\Theta-{\frac {81}{16}} \right)
\)

\[\left\{\begin{tabular}{*{5}c}
	0& 1& --1& $3\pm2\sqrt{2}$& $\infty$\\ 
	\hline
	0&  0& 0& 0& 3/2\\
	0&  0& 0& 1& 3/2\\
	0&  0& 1& 3& 3/2\\
	0&  0& 1& 4& 3/2\\
\end{tabular}\right\}\]
\end{footnotesize}

\end{document}